\theoremstyle{plain}
\newtheorem{thm}{Theorem}[section]
\newtheorem{cor}[thm]{Corollary}
\theoremstyle{definition}
\newtheorem{defn}[thm]{Definition}
\newtheorem{lem}[thm]{Lemma}
\newtheorem{rmk}[thm]{Remark}
\numberwithin{equation}{thm}
\newcommand{\emphbf}[1]{\emph{\textbf{#1}}}
\DeclareMathAlphabet{\mathpzc}{OT1}{pzc}{m}{it}
\newcommand{\rad}[1]{\radoperator(#1)}
\newcommand{\radsquare}[1]{\radoperator^2(#1)}
\DeclareMathOperator{\radoperator}{rad}
\DeclareMathOperator{\Kopf}{top}
\DeclareMathOperator{\soc}{soc}
\DeclareMathOperator{\Hom}{Hom}
\DeclareMathOperator{\modu}{mod}
\begin{document}

\title{Biserial algebras via subalgebras and the path algebra of $\mathbb{D}_4$}
\author{Julian K\"ulshammer}
\address{Christian-Albrechts-Universit\"at zu Kiel, Ludewig-Meyn-Str. 4, 24098 Kiel, Germany}
\email{kuelshammer@math.uni-kiel.de}

\begin{abstract}
We give two new criteria for a basic algebra to be biserial. The first one states that an algebra is biserial iff all subalgebras of the form $eAe$ where $e$ is supported by at most $4$ vertices are biserial. The second one gives some condition on modules that must not exist for a biserial algebra. These modules have properties similar to the module with dimension vector $(1,1,1,1)$ for the path algebra of the quiver $\mathbb{D}_4$.\\
Both criteria generalize criteria for an algebra to be Nakayama. They rely on the description of a basic biserial algebra in terms of quiver and relations given by R. Vila-Freyer and W. Crawley-Boevey \cite{CBVF98}.
\end{abstract}

\maketitle

\section{Introduction}
Throughout this paper let $k$ be an algebraically closed field, denote by $A$ a fi\-nite di\-men\-sio\-nal $k$-algebra, its (Jacobson) radical by $\rad{A}$ and by $\modu A$ the category of all finitely generated left modules. For $M\in \modu A$ we denote by $\radoperator^i M$ the $i$-th radical of $M$, by $\soc^i M$ the $i$-th socle of $M$ (cf. \cite{B95} definition 1.2.1) and by $Q=(Q_0,Q_1,s,t)$ a quiver with set of vertices $Q_0$, set of arrows $Q_1$ and starting (resp. terminal) point functions $s$ (resp. $t$). For every point $i\in Q_0$ of the quiver there exists a zero path, denoted by $e_i$, the ideal of the path algebra $kQ$ generated by the arrows will be denoted by $kQ^+$. For basic facts on radical, socle and quivers, that we use without further reference, we refer to \cite{ASS06}.\\
In 1979 K. Fuller (\cite{F79}) defined biserial algebras as algebras whose indecomposable projective left and right modules have uniserial submodules which intersect zero or simple and which sum to the unique maximal submodule (Tachikawa mentioned this condition before, but didn't give these algebras a name \cite{T61} proposition 2.7). These natural generalizations of Nakayama algebras are a class of tame algebras as W. Crawley-Boevey showed in \cite{CB95}. Examples of these algebras are blocks of group algebras with cyclic or dihedral defect group (see e.g. \cite{Ri75}, \cite{E87}), the algebras appearing in the Gel'fand-Ponomarev classification of the singular Harish-Chandra modules over the Lorentz group (\cite{GP68}) as well as special biserial algebras, which were recently used to test certain conjectures (\cite{EHIS04}, \cite{LM04}, \cite{S10}).\\
As one looks at Nakayama algebras (cf. \cite{ASS06} Section V.3) there are at least three ways to describe them: First via the projective left and right modules, i.e. they are uniserial, second via the (ordinary) quiver (and its relations), i.e. the quiver of $A$ is a linearly oriented (extended) Dynkin diagram of type $\mathbb{A}_n$ or $\tilde{\mathbb{A}}_n$ for some $n\geq 1$, and third via certain ``small'' modules in the module category (cf. Lemma \ref{lem2}), i.e. there exists no local module $M$ of Loewy length two, such that $l(\rad M)=2$ and no colocal module $M$ of Loewy length two, such that $l(M/\soc M)=2$ (we could call this property non-linearly oriented $\mathbb{A}_3$-freeness).\\
For biserial algebras aside from the original definition a description of basic biserial algebras in terms of quivers and relations is due to R. Vila-Freyer and W. Crawley-Boevey (\cite{CBVF98}). We use this description to obtain one in terms of certain ``small'' modules analogous to the description for Nakayama algebras given above.\\
A basic algebra $A$ will be called $\mathbb{D}_4$-free iff there is no $A$-module with similar properties to the one with dimension vector $(1,1,1,1)$ for the path algebra of the quiver $\mathbb{D}_4$. Our result will then be the following:

\begin{thm}\label{thm1}
A basic algebra $A$ is biserial iff it is $\mathbb{D}_4$-free.
\end{thm}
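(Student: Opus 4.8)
The plan is to derive both implications from the description of a basic biserial algebra by quiver and relations due to Vila-Freyer and Crawley-Boevey \cite{CBVF98}, following the pattern of the corresponding characterisation of Nakayama algebras (Lemma \ref{lem2}). A convenient preliminary reduction is that non-biseriality, and likewise the failure of $\mathbb{D}_4$-freeness, can be detected on a subalgebra $eAe$ with $e$ supported on at most four vertices; using \cite{CBVF98} this ``localisation'' is essentially the content of the first criterion, and it lets us assume throughout that $A = kQ/I$ with $|Q_0|\le 4$.

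For ``biserial $\Rightarrow$ $\mathbb{D}_4$-free'' I would argue directly from the definitions. If $A$ is biserial then every indecomposable projective $P$ satisfies $\radoperator P = U + V$ with $U$, $V$ uniserial and $U\cap V$ zero or simple, and, applying the same to $A^{\mathrm{op}}$, the indecomposable injectives have the dual (``co-biserial'') structure; in particular at most two arrows of $Q$ start, and at most two end, at any vertex, and by \cite{CBVF98} every relation is of the restricted monomial-plus-binomial form. A module $M$ of the kind forbidden by $\mathbb{D}_4$-freeness is a quotient of some $P_i$ (or a submodule of some $I_i$) exhibiting three ``independent uniserial directions'' out of a common simple top (resp. into a common simple socle); lifting this configuration along $P_i\twoheadrightarrow M$ yields a quotient of $\radoperator P_i$ with three-dimensional top, which is impossible since $\radoperator P_i$ is generated by the two elements representing $U/\radoperator U$ and $V/\radoperator V$ --- and dually for $I_i$. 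The only delicate point is that the binomial relations must not enlarge the ``$U+V$'' shape of $\radoperator P_i$, and this is exactly what the normal form of \cite{CBVF98} guarantees.

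For the converse I would proceed by contraposition: given an algebra that is not biserial, build a $\mathbb{D}_4$-type module. Having reduced to $|Q_0|\le 4$, one compares $A = kQ/I$ with the list of \cite{CBVF98} and separates the two ways biseriality can fail. If some vertex $i$ carries three arrows pointing the same way, then for the idempotent supported by $i$ and by the targets of these three arrows the corresponding subalgebra has, after killing the ideal generated by all paths outside these three arrows, the path algebra of the quiver $\mathbb{D}_4$ as a homomorphic image, and the image of the $(1,1,1,1)$-module is the desired $M$. If instead every vertex carries at most two arrows in and at most two out but $I$ is not of the admissible form, then for some $i$ the module $\radoperator P_i$ is not a sum of two uniserials meeting in at most a simple (or the dual fails for some $I_i$), and one then reads off three uniserial directions directly from the structure of $\radoperator P_i$, respectively of $I_i/\soc I_i$, and assembles them into $M$.

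The step I expect to be the main obstacle is this last one --- turning ``the relations are not of Vila-Freyer and Crawley-Boevey type'' into an explicit submodule or subquotient realising the $\mathbb{D}_4$ pattern. This calls for a precise analysis of how the normal form of \cite{CBVF98} can break down on an algebra with at most four vertices, followed by a finite, but not purely mechanical, case check that exhibits $M$ in every case; it is precisely the reduction to four vertices that keeps this analysis bounded, and correspondingly the locality of $\mathbb{D}_4$-freeness is what makes the criterion itself checkable in practice.
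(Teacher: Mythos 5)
There is a genuine gap, in fact several. First, your preliminary reduction to $|Q_0|\le 4$ is not available for $\mathbb{D}_4$-freeness. The remark following Definition~\ref{D4-free} gives an explicit example (a five-vertex quiver with the relation $ax=u''u'u$) showing that the witnessing data for conditions (3) and (4) may involve paths running through arbitrarily many vertices outside the support of $e$; this is precisely why those conditions are phrased as constraints on a small part of a possibly large module, and why the corollary at the end of the paper needs the extra hypothesis that $eAe$ has no oriented cycles. So ``failure of $\mathbb{D}_4$-freeness can be detected on $eAe$ with $e$ supported on four vertices'' is not something you may assume; the paper never localises in this way and proves the theorem directly on $A$. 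Second, your forward direction misreads the shape of the forbidden modules. A module of type (3) does not exhibit ``three independent uniserial directions out of a common simple top'' --- that is condition (1), which is disposed of by Lemma~\ref{lem2}. Type (3) concerns an element $\tilde a_1b_0$ in the \emph{second} radical layer with two independent continuations $\tilde a_2\tilde a_1b_0$, $\tilde a_3\tilde a_1b_0$ that cannot be split as $\hat a_1b_0+\hat a_1'b_0$ compatibly with the (at most two) arrows ending at $i$; ruling this out for a biserial algebra requires invoking the explicit relations $(a_2-\omega qa_3)a_1$ of Corollary~\ref{corVF} and a case split on whether $\hat a_1'b_0$ vanishes, none of which is supplied by the remark that $\radoperator P_i$ is generated by two elements.

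Third, and most seriously, the converse in the case where the quiver is biserial but no bisected presentation exists is exactly where the paper's proof does its work, and you explicitly defer it as ``the main obstacle.'' The argument there is not a comparison with a normal form on a four-vertex quiver: one fixes a candidate quadruple $(\sigma,\tau,\mathpzc{p},\mathpzc{q})$, analyses the six possible local arrow configurations at a vertex where $\mathpzc{q}(a)\mathpzc{p}(x)\neq 0$, and shows that either one of the concrete modules $Ae_{s(x)}/\radsquare{A}\mathpzc{p}(x)$ or $e_{t(b)}A/\mathpzc{q}(b)\radsquare{A}$ satisfies (a)--(c), or else the failure of (a) or (c) hands you new elements with which to redefine $\mathpzc{p}$ and $\mathpzc{q}$ and repair the presentation at that vertex. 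Without this construction-or-repair dichotomy the implication ``$\mathbb{D}_4$-free $\Rightarrow$ biserial'' is unproved, so the proposal as it stands establishes neither direction in full.
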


Furthermore, from the description of the quiver of $A$ we can see that it is necessary and sufficient that all subalgebras of the form $eAe$ with support of one vertex and its neighbouring vertices are Nakayama. We could call these subalgebras of type $\mathbb{A}_3$. Our second main result generalizes this for biserial algebras and states

\begin{thm}\label{thm2}
An algebra $A$ is biserial iff all subalgebras $eAe$ of type $\mathbb{D}_4$, that is, with support of a vertex and at most three of its neighbouring vertices, are biserial.
\end{thm}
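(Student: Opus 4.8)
The plan is to reduce Theorem \ref{thm2} to Theorem \ref{thm1} by showing that $\mathbb{D}_4$-freeness is a \emph{local} property, detectable on the subalgebras $eAe$ of type $\mathbb{D}_4$. The forward implication is immediate: if $A$ is biserial then by Fuller's definition any idempotent subalgebra $eAe$ inherits the biserial structure on projectives (uniseriality of the two summands and the intersection condition are preserved under the functor $e(-)$, since $e(-)$ is exact and sends $Ae_i$ to $eAe_i$, carrying uniserial submodules to uniserial submodules and simples to simples or $0$); in particular every $eAe$ of type $\mathbb{D}_4$ is biserial. So the content is the converse.

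For the converse I would argue contrapositively using Theorem \ref{thm1}: suppose $A$ is not biserial, so by Theorem \ref{thm1} there exists a ``$\mathbb{D}_4$-type'' module $M$ witnessing the failure. The key observation is that such an $M$, being modelled on the $(1,1,1,1)$-module for $\mathbb{D}_4$, has support concentrated on a very small set of vertices — morally a central vertex together with at most three neighbours — so that $M$ is in fact (the pullback along $A \to eAe$ of) a module over $eAe$ for a suitable idempotent $e$ supported on a vertex and at most three of its neighbours. One then checks that the defining properties of a $\mathbb{D}_4$-type module are preserved when passing between $A$ and $eAe$: radical layers, socle layers, locality/colocality and the relevant composition-length conditions all behave well under the exact functor $e(-)$ and its (fully faithful on the relevant subcategory) adjoint, because $e$ is chosen to contain the whole support of $M$. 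Hence $eAe$ admits a $\mathbb{D}_4$-type module, so by Theorem \ref{thm1} again $eAe$ is not biserial, and this $eAe$ is of type $\mathbb{D}_4$ — contradiction.

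Concretely the steps are: (1) record that $e(-)\colon \modu A \to \modu eAe$ is exact, preserves tops, radicals, socles and composition series of modules supported on vertices $i$ with $e_i \leq e$, and that for such modules it is compatible with the inflation functor along $A \twoheadrightarrow A/A(1-e)A \hookleftarrow$-ish comparison (more precisely, use that $eAe$-modules pull back to $A$-modules via $A \to eAe$ only when $e$ is chosen so that $A(1-e)A$ annihilates $M$; since $M$ has small support one enlarges $e$ to make this true); (2) unwind the explicit list of properties defining a $\mathbb{D}_4$-type module from the statement of Theorem \ref{thm1} and verify each is reflected and preserved by $e(-)$; (3) identify from the shape of the support of a $\mathbb{D}_4$-type module that the relevant $e$ can always be taken supported on a vertex and at most three of its neighbours, i.e. of type $\mathbb{D}_4$; (4) assemble: $A$ not biserial $\Rightarrow$ $\mathbb{D}_4$-type module over $A$ $\Rightarrow$ $\mathbb{D}_4$-type module over some $eAe$ of type $\mathbb{D}_4$ $\Rightarrow$ that $eAe$ not biserial, contradicting the hypothesis.

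The main obstacle I anticipate is step (3) together with the support bookkeeping in step (1): one must be careful that the ``$\mathbb{D}_4$-type'' conditions — which presumably involve not just $M$ but its radical and socle, hence vertices at distance up to two in the quiver — genuinely only force four vertices into the support, and that enlarging $e$ to kill $A(1-e)A$ on $M$ does not drag in a fifth vertex. This is exactly the point where the precise definition of $\mathbb{D}_4$-freeness (and the Vila-Freyer--Crawley-Boevey description of the quiver of a biserial algebra, which bounds how the offending configuration sits in $Q$) has to be used; once the support is pinned down to the correct local picture, the functoriality arguments in (1)--(2) and the final assembly in (4) are routine.
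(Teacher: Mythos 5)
Your forward direction is essentially sound: using Fuller's definition directly, $e(-)$ sends $Ae_i$ to $eAe_i$, one checks $\radoperator(eAe_i)=e\radoperator(Ae_i)$ (via $\rad{eAe}=e\rad{A}e$), that $eU$ is uniserial for $U\subseteq Ae_i$ uniserial (any $eAe$-submodule $V\subseteq eU$ satisfies $V=e(AV)$ with $AV$ a term of the composition series of $U$), and that $e(U\cap V)=eU\cap eV$ by exactness. This is a legitimately shorter route than the paper's, which instead constructs a bisected presentation of $eAe$ from one of $A$. The converse, however, has a genuine gap at your step (3), and it occurs at exactly the point you flag as the anticipated obstacle. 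First, the witnessing data in the $\mathbb{D}_4$-freeness criterion (conditions (3) and (4) of Definition \ref{D4-free}) are \emph{not} supported on four vertices: the paper's own remark following that definition exhibits a five-vertex quiver with relation $ax=u''u'u$ where any witness must involve the element $\tilde{a}_1\in e_i\rad{A}e_j$ given by a long path through intermediate vertices, and the module $M$ may be large. So ``enlarge $e$ to contain the support of $M$'' is not available: doing so drags $e$ outside type $\mathbb{D}_4$, and restricting instead to $eM$ breaks the transfer of the non-existence condition (c) (which must be \emph{reflected}, not merely preserved, and the passage $\radsquare{eAe}\subsetneq e\radsquare{A}e$ works against you in condition $(\alpha)$).

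Second, and more structurally: at a vertex $l$ with four neighbours the local obstruction to biseriality genuinely involves five vertices --- $l$, the sources of the two incoming arrows $x,y$, and the targets of the two outgoing arrows $a,b$ --- so \emph{no single} idempotent of type $\mathbb{D}_4$ contains the relevant configuration. Your strategy ``find one $e$ of type $\mathbb{D}_4$ seeing the whole witness'' cannot succeed there even in principle; any correct proof must combine information from the four different subalgebras corresponding to the four choices of $J(l)$. This is precisely what the paper's proof of Theorem \ref{5pt}(iii) does: it works directly with the Vila-Freyer--Crawley-Boevey bisected presentations of the local algebras $eAe$ and glues them into a bisected presentation of $A$ by a case-by-case reconciliation of the values of $\sigma,\tau,\mathpzc{p},\mathpzc{q}$ at such a vertex (note also that the paper proves this theorem in Section 3, independently of and prior to Theorem \ref{thm1}, so while your reduction is not circular, it is a different route that founders on the two points above). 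To repair your argument you would need either to prove a ``local'' version of the $\mathbb{D}_4$-freeness criterion whose witnesses are genuinely confined to a vertex and three neighbours, or to abandon the reduction and argue via presentations as the paper does.
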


Our paper is organized as follows. In Section 2, we recall the results of \cite{VF94} and \cite{CBVF98} giving a description of a basic biserial algebra in terms of its quiver and relations. Section 3 then gives the precise statement of Theorem \ref{thm2} and its proof. The precise definition of $\mathbb{D}_4$-free and the proof of Theorem \ref{thm1} is then presented in Section 4.

\section{Biserial algebras}

\begin{defn}[\cite{F79}]
An algebra $A$ is called \emphbf{biserial} if for every projective left or right module $P$ there exist uniserial submodules $U$ and $V$ of $P$ satisfying $\rad{P}=U+V$ (not necessarily a direct sum), such that $U\cap V$ is zero or simple.
\end{defn}

In the remainder of this section we present the results of R. Vila-Freyer and W. Crawley-Boevey who describe biserial algebras in terms of quivers and relations. For the proofs we refer to \cite{CBVF98}. The notation has been adjusted to ours.

\begin{defn}[\cite{CBVF98} Definitions 1-3]
\begin{enumerate}[(i)]
\item A \emphbf{bisection} of a quiver $Q$ is a pair $(\sigma, \tau)$ of functions $Q_1\to \{\pm 1\}$, such that if $a$ and $b$ are distinct arrows with $s(a)=s(b)$ (respectively $t(a)=t(b)$), then $\sigma(a)\neq \sigma(b)$ (respectively $\tau(a)\neq \tau(b)$). A quiver, which admits a bisection, i.e. in each vertex there start and end at most two arrows, is called \emphbf{biserial}.
\item Let $Q$ be a quiver and $(\sigma,\tau)$ a bisection. We say that a path $a_r\cdots a_1$ in $Q$ is a \emphbf{good path}, or more precisely is a $(\sigma,\tau)$\emphbf{-good path},  if $\sigma(a_{i})=\tau(a_{i-1})$ for all $1< i\leq r$. Otherwise we say that it is a \emphbf{bad path}, or is a $(\sigma,\tau)$\emphbf{-bad path}. The paths $e_i$ ($i\in Q_0$) are good.
\item By a \emphbf{bisected presentation} $(Q,\sigma,\tau, \mathpzc{p},\mathpzc{q})$ of an algebra $A$ we mean that $Q$ is a quiver with a bisection $(\sigma,\tau)$ and that $\mathpzc{p}, \mathpzc{q}: kQ\to A$ are surjective algebra homomorphisms with $\mathpzc{p}(e_i)=\mathpzc{q}(e_i)$ for all $i\in Q_0$, $\mathpzc{p}(a), \mathpzc{q}(a)\in \rad{A}$ for all arrows $a\in Q_1$ and $\mathpzc{q}(a)\mathpzc{p}(x)=0$ whenever $a,x\in Q_1$ with $ax$ a bad path.
\end{enumerate}
\end{defn}

\begin{thm}[\cite{CBVF98} Theorem]
Any basic biserial algebra $A$ has a bisected presentation $(Q,\sigma,\tau,\mathpzc{p},\mathpzc{q})$ in which $Q$ is the quiver of $A$. Conversely any algebra with a bisected presentation is basic and biserial.
\end{thm}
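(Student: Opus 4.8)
Write $\radoperator=\radoperator(A)$ and let $Q$ be the quiver of $A$; for an idempotent $e$ one has $\radoperator(eAe)=e\radoperator e$, hence $\radoperator^{2}(eAe)\subseteq\radoperator^{2}(A)$. The statement to be made precise in Section~3 is: $eAe$ is \emph{of type $\mathbb{D}_4$} if $e=e_v+\sum_{w\in S}e_w$ for a vertex $v$ of $Q$ and a set $S$ of at most three vertices each joined to $v$ by an arrow (in either direction), and $A$ is biserial if and only if every subalgebra of type $\mathbb{D}_4$ is biserial. The ``only if'' direction is the routine one. The functor $e(-)\cong\Hom_A(Ae,-)\colon\modu A\to\modu eAe$ is exact and sends the indecomposable projective $P_w=Ae_w$ (for $w$ in the support of $e$) to the indecomposable projective $eAe$-module $eP_w$; moreover $\radoperator_{eAe}(eP_w)=e\,\radoperator_A(P_w)$ (using $e_w\in eP_w$ and $\radoperator(eAe)=e\radoperator e$), and if $M$ is uniserial over $A$ then $eM$ is uniserial over $eAe$, because $N\mapsto AN$ is an order-preserving injection of the submodule lattice of $eM$ into that of $M$ (with retraction $N'\mapsto eN'$, as $eAN=(eAe)N=N$), and a subposet of a chain is a chain. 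Applying $e(-)$ to a decomposition $\radoperator_A(P_w)=U+V$ with $U,V$ uniserial and $U\cap V$ zero or simple, and using exactness ($e(U\cap V)=eU\cap eV$), exhibits $eP_w$ as biserial; the same argument on the right finishes the implication --- and in fact shows $eAe$ biserial for \emph{every} $e$.

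For the converse, Step~1 is that $Q$ is a biserial quiver. If a vertex $v$ had three arrows leaving it (three entering being dual, targets possibly coinciding), take $v$ together with at most three of those targets; the resulting $eAe$ is of type $\le\mathbb{D}_4$, and since the chosen arrows are linearly independent modulo $\radoperator^{2}(A)$ they remain so modulo $\radoperator^{2}(eAe)$, so the quiver of $eAe$ has at least three arrows leaving $v$ --- impossible, because a biserial algebra has a bisection and hence at most two arrows at each vertex in each direction. Now fix a bisection $(\sigma,\tau)$ of $Q$. An algebra homomorphism $kQ\to A$ that is the identity on idempotents and sends each arrow $a$ to an element of $e_{t(a)}\radoperator e_{s(a)}$ representing the class of $a$ in $\radoperator/\radoperator^{2}$ is automatically surjective; so a bisected presentation $(Q,\sigma,\tau,\mathpzc p,\mathpzc q)$ of $A$ is exactly a family of such lifts $\mathpzc p(a),\mathpzc q(a)$ with $\mathpzc q(a)\mathpzc p(x)=0$ for every $(\sigma,\tau)$-bad path $ax$ of length two.

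Step~2 produces the lifts, and the decisive point is that these conditions form a \emph{matching}: at a vertex $j$ each of its at most two outgoing arrows forms a bad length-two path with exactly one of its at most two incoming arrows, so each of the values $\mathpzc p(a)$, $\mathpzc q(a)$ occurs in at most one equation and distinct equations involve disjoint pairs of values. It therefore suffices to solve each equation on its own. The equation attached to a bad path $x\colon i\to j$, $a\colon j\to l$ lives entirely inside the subalgebra $B=e'Ae'$ with $e'=e_i+e_j+e_l$ (enlarged, if need be, by one more neighbour of $j$, so still of type $\mathbb{D}_4$): $e_l\radoperator e_j=e_l\radoperator(B)e_j$, $e_j\radoperator e_i=e_j\radoperator(B)e_i$ and their product all lie in $B$. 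Since $B$ is biserial, a bisected presentation of $B$ in which $ax$ is bad yields lifts $\mathpzc q_B(a),\mathpzc p_B(x)$ (still valid lifts for $A$, as $\radoperator^{2}(B)\subseteq\radoperator^{2}(A)$) with $\mathpzc q_B(a)\mathpzc p_B(x)=0$. Assembling these over all bad paths, and putting $\mathpzc p(a)=\mathpzc q(a)=a$ for arrows lying in no bad path, gives a bisected presentation of $A$, so $A$ is biserial by the converse half of the theorem of Vila-Freyer and Crawley-Boevey.

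The step I expect to be the genuine obstacle is the clause ``a bisected presentation of $B$ in which $ax$ is bad'': one is only handed \emph{some} bisected presentation of each auxiliary subalgebra, and one must show that the bisections these presentations impose at a common middle vertex $j$ can be taken compatibly and matched with the globally fixed $(\sigma,\tau)$ --- equivalently, that every $(\sigma,\tau)$-bad path of length two really does admit lifts of its two arrows with vanishing product. When $j$ carries two incoming and two outgoing arrows this becomes a small orientation problem on the $2\times2$ square of paths through $j$, which one settles using the flexibility of bisected presentations recorded in Section~2 (the results of \cite{VF94}) applied to the type-$\mathbb{D}_4$ subalgebras around $j$; the matching structure of Step~2 is what guarantees that, once this choice has been made, there is nothing further to verify.
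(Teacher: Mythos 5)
Your proposal does not prove the statement at hand. The statement is the theorem of Vila-Freyer and Crawley-Boevey itself: that every basic biserial algebra (biserial in Fuller's sense, via uniserial submodules $U,V$ of $\rad P$ with $U\cap V$ zero or simple) admits a bisected presentation on its own quiver, and conversely that any algebra with a bisected presentation is basic and biserial. In the paper this is an imported result, quoted from \cite{CBVF98} with no proof given. What you prove instead is (a version of) Theorem \ref{thm2}/Theorem \ref{5pt}: that biseriality can be tested on the subalgebras $eAe$ of type $\mathbb{D}_4$. Worse, your argument is circular with respect to the actual statement: your Step~2 concludes ``so $A$ is biserial by the converse half of the theorem of Vila-Freyer and Crawley-Boevey,'' i.e.\ you invoke the very theorem you were asked to prove. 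Nowhere do you construct a bisection and lifts $\mathpzc{p},\mathpzc{q}$ from Fuller's uniserial decomposition (the hard direction of \cite{CBVF98}), nor do you show that a bisected presentation forces $\rad(Ae_i)$ to be a sum of two uniserials with small intersection (the converse direction). The only place Fuller's definition is genuinely engaged is your ``only if'' paragraph, which is the paper's Theorem \ref{5pt}(i), not the cited theorem.

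Even read as a proof of Theorem \ref{5pt}, the argument is incomplete at exactly the point you flag yourself: you are handed \emph{some} bisected presentation of each subalgebra $B=e'Ae'$, and you must make the induced bad paths at a common middle vertex $j$ compatible with a single global bisection $(\sigma,\tau)$; when two arrows enter and two leave $j$ this is a genuine $2\times 2$ orientation problem, and your appeal to ``the flexibility of bisected presentations'' is a placeholder, not an argument. This compatibility question is precisely what the paper's proof of Theorem \ref{5pt}(iii) resolves by an explicit case-by-case analysis over the four presentations $(Q_l^i,\sigma_l^i,\tau_l^i,\mathpzc{p}_l^i,\mathpzc{q}_l^i)$. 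There is also a gap in your Step~1 reduction: an arrow of the quiver of $eAe$ need not come from an arrow of $Q$, since $\rad^2(eAe)=e\rad(A)e\rad(A)e$ is in general strictly smaller than $e\rad^2(A)e$ (the paper makes this point explicitly in the proof of Theorem \ref{5pt}), so ``the chosen arrows remain linearly independent modulo $\rad^2(eAe)$'' is the right statement but your parenthetical justification via $\rad^2(eAe)\subseteq\rad^2(A)$ is what actually carries it, and you should say so. To repair the submission you would either need to reproduce the construction of \cite{CBVF98} (building $(\sigma,\tau)$ and the two surjections from the chains $U\supset \rad U\supset\cdots$ and $V\supset\rad V\supset\cdots$ inside $\rad P$), or explicitly declare the statement as cited and move your material to where it belongs, namely the proof of Theorem \ref{thm2}.
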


\begin{cor}[\cite{CBVF98} Corollary 3]\label{corVF}
Suppose that $Q$ is a quiver, $(\sigma, \tau)$ is a bisection, elements $d_{ax}\in kQ$ are defined for each bad path $ax$, $a,x\in Q_1$, and they satisfy
\begin{enumerate}[({C}1)]
\item Either $d_{ax}=0$ or $d_{ax}=\omega b_t\cdots b_1$ with $\omega\in k^\times, t\geq 1$ and $b_t\cdots b_1x$ a good path with $t(b_r)=t(a)$ and $b_t\neq a$,
\item if $d_{ax}=\phi b$ and $d_{by}=\psi a$ with $\phi,\psi\in k^\times$ and $a,b,x,y\in Q_1$, then $\phi\psi\neq 1$.
\end{enumerate}
If $I$ is an admissible ideal in $kQ$ which contains all the elements $(a-d_{ax})x$, then $kQ/I$ is a basic biserial algebra. Conversely for every basic biserial algebra $A$ there exist a quiver $Q$, a bisection $(\sigma,\tau)$ and for every bad path $ax$, $a,x\in Q_1$, elements $d_{ax}$, which satisfy the above conditions, and an admissible ideal $I$, such that $A\cong kQ/I$.
\end{cor}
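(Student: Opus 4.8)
The plan is to deduce both directions of the corollary from the structure theorem just quoted, treating the data $(d_{ax})$ as the explicit bookkeeping of the two surjections $\mathpzc{p},\mathpzc{q}$ of a bisected presentation. The combinatorial fact that makes everything run is that, for a bisection $(\sigma,\tau)$, every arrow $a$ has \emph{at most one} bad continuation: among the arrows $x$ with $t(x)=s(a)$, which are distinguished by their pairwise different values $\tau(x)$, exactly the one with $\tau(x)\neq\sigma(a)$ (if it exists at all) makes $ax$ a bad path. Thus $d_{ax}$ is effectively indexed by $a$ alone, which is precisely what lets condition (C2) be phrased as a pairing of two arrows $a,b$.

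For the direction ``the data yields a basic biserial algebra'', set $A=kQ/I$, let $\mathpzc{p}\colon kQ\to A$ be the canonical projection, and define $\mathpzc{q}$ as the algebra homomorphism with $\mathpzc{q}(e_i)=\mathpzc{p}(e_i)$ and $\mathpzc{q}(a)=\mathpzc{p}(a)-\overline{d_{ax}}$ when $a$ has the bad continuation $x$ (and $\mathpzc{q}(a)=\mathpzc{p}(a)$ otherwise), where the bar denotes the image in $A$. Condition (C1) forces $\overline{d_{ax}}\in e_{t(a)}\radoperator A\,e_{s(a)}$, so $\mathpzc{q}$ is well defined and $\mathpzc{q}(a)\in\radoperator A$. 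The bad-path relation is then automatic, $\mathpzc{q}(a)\mathpzc{p}(x)=\mathpzc{p}(a)\mathpzc{p}(x)-\overline{d_{ax}}\,\mathpzc{p}(x)=\overline{(a-d_{ax})x}=0$, since $(a-d_{ax})x\in I$. What remains is surjectivity of $\mathpzc{q}$, and this is exactly where (C1)--(C2) enter: passing to $\radoperator A/\radoperator^2 A$, which has the arrows as a basis because $Q$ is the quiver of $A$, the endomorphism induced by $\mathpzc{q}$ is $\id-N$, where $N$ is nonzero only on arrows admitting a length-one reduction $d_{ax}=\phi b$, in which case $N(\bar a)=\phi\bar b$. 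Such a $b$ must be the unique other arrow ending at $t(a)$, so $N$ is block diagonal with blocks of size one and $2\times 2$ blocks $\bigl(\begin{smallmatrix}0 & \psi\\ \phi & 0\end{smallmatrix}\bigr)$ arising from a pair $d_{ax}=\phi b,\ d_{by}=\psi a$; the corresponding block of $\id-N$ has determinant $1-\phi\psi$, which is nonzero precisely by (C2). Hence $\id-N$ is invertible, the classes $\overline{\mathpzc{q}(a)}$ span $\radoperator A/\radoperator^2 A$, and a Nakayama argument yields that $\mathpzc{q}$ is onto. Now $(Q,\sigma,\tau,\mathpzc{p},\mathpzc{q})$ is a bisected presentation, and the quoted Theorem makes $A$ basic and biserial.

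For the converse, begin from a bisected presentation $(Q,\sigma,\tau,\mathpzc{p},\mathpzc{q})$ with $Q$ the quiver of $A$, which exists by the Theorem, and put $I=\ker\mathpzc{p}$; this is an admissible ideal with $A\cong kQ/I$. For a bad path $ax$ I would set $d_{ax}=0$ if $\overline{ax}=0$, and otherwise use $\mathpzc{q}(a)\mathpzc{p}(x)=0$ to write $\overline{ax}=\bigl(\mathpzc{p}(a)-\mathpzc{q}(a)\bigr)\mathpzc{p}(x)$ and choose $d_{ax}$ to be the good monomial with $\overline{ax}=\overline{d_{ax}x}$. Condition (C2) then comes out of the same determinant computation, read backwards: $\mathpzc{q}$ is given as surjective, so the induced endomorphism of $\radoperator A/\radoperator^2 A$ is invertible, forcing $1-\phi\psi\neq 0$ for any pair $d_{ax}=\phi b,\ d_{by}=\psi a$.

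The step I expect to be the real obstacle is establishing (C1) in the converse, namely that $\overline{ax}$ is the image of a \emph{single} good path $\omega b_t\cdots b_1 x$ rather than a general linear combination of good paths, together with $b_t\neq a$ and the goodness of $b_t\cdots b_1 x$. This is not formal: it rests on the fine structure of biserial algebras, that the radical of each indecomposable projective is a sum of two uniserial modules traced out by good paths, so that $\mathpzc{p}(a)-\mathpzc{q}(a)$ lands, layer by layer, in a one-dimensional space and is therefore a single good monomial. The distinctness of the leading arrow from $a$ at the bottom is immediate from the bisection (one has $\sigma(b_1)=\tau(x)\neq\sigma(a)$ while $s(b_1)=s(a)$), and I would derive $b_t\neq a$ from the same uniserial structure that forces the monomial shape. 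I would therefore organise the whole proof so that this uniseriality input is isolated into a single lemma, leaving everything else as the bookkeeping sketched above.
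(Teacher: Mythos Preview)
The paper does not contain a proof of this corollary: it is quoted verbatim from \cite{CBVF98} (as Corollary~3 there), and the surrounding text explicitly says ``For the proofs we refer to \cite{CBVF98}.'' So there is no in-paper argument to compare your proposal against.

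That said, your approach for the forward direction is essentially the one used in \cite{CBVF98}: define $\mathpzc{p}$ as the canonical surjection and perturb it to $\mathpzc{q}$ via the elements $d_{ax}$, then check surjectivity of $\mathpzc{q}$ by passing to $\radoperator A/\radoperator^2 A$. Your observation that each arrow has at most one bad continuation, so that the perturbation is well-indexed by $a$, is correct and is exactly what makes the $2\times 2$ block analysis and the role of (C2) transparent.

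For the converse you correctly identify the genuine difficulty: extracting a \emph{single} good monomial $d_{ax}$ from the relation $\mathpzc{q}(a)\mathpzc{p}(x)=0$. Your heuristic (``the radical of each projective is a sum of two uniserials traced by good paths'') is in the right spirit but is not yet a proof; in \cite{CBVF98} this step is handled via a structural lemma on the images of good paths in a biserial algebra (their Lemma~1.2), and the present paper leans on that same lemma (see Lemma~\ref{tl} and the sentence preceding it). One small slip in your sketch: the inequality you derive is $b_1\neq a$ (from $\sigma(b_1)=\tau(x)\neq\sigma(a)$ at the common source $s(a)=t(x)$), whereas (C1) requires $b_t\neq a$ at the common \emph{target} $t(a)$; the latter does not follow from the bisection alone and genuinely needs the uniseriality input you allude to. Your plan to isolate this into a standalone lemma is therefore the right organisation, but you should be aware that this lemma is precisely the non-formal core of \cite{CBVF98}.
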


Observe that the algebras where $d_{ax}=0$ for all bad paths $ax$ are precisely the special biserial algebras which are a lot better understood.\par
The following technical lemma will be used in the next theorem. Its proof relies on Lemma 1.2 in \cite{CBVF98}. The remaining parts are proved by similar methods, so we omit it here although it is nowhere published.

\begin{lem}[\cite{VF94}  Lemma 2.1.3.1]\label{tl}
Let $A=kQ/I$ as in Corollary \ref{corVF} and let $a,x\in Q_1$ be arrows, such that $ax$ is a bad path and $d_{ax}=\omega b_t\cdots b_1$ with $\omega\in k^\times, b_t,\dots,b_1\in Q_1$. Then for any arrow $d$ with $s(d)=t(a)$ we have $dax$ and $db_t\cdots b_1x$ are both elements of $I$.
\end{lem}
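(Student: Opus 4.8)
The plan is to set up the situation directly from Corollary \ref{corVF}. Fix $a,x\in Q_1$ with $ax$ a bad path and $d_{ax}=\omega b_t\cdots b_1$, and fix an arrow $d$ with $s(d)=t(a)$. Since $(a-d_{ax})x\in I$, multiplying on the left by $d$ gives $d(a-d_{ax})x = dax - \omega\, d b_t\cdots b_1 x \in I$, so it suffices to show that one of $dax$ or $db_t\cdots b_1x$ lies in $I$; the other then follows. So the whole lemma reduces to proving $dax\in I$, and this is where the bisection combinatorics and Lemma~1.2 of \cite{CBVF98} enter.

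The key point is to analyse the path $dax$ using the bisection $(\sigma,\tau)$. By (C1) we have $t(b_t)=t(a)=s(d)$ and $b_t\neq a$; since at most two arrows start at $s(d)=t(a)$ and at most two end there, and $\tau(a)\neq\tau(b_t)$ (they are distinct arrows with the same terminus), the arrow $d$ with $s(d)=t(a)$ satisfies either $\sigma(d)=\tau(a)$ or $\sigma(d)=\tau(b_t)$, but not both. In the first case $db_t\cdots b_1x$ is a bad path (since $\sigma(d)=\tau(a)\neq\tau(b_t)$), and one invokes the defining relation $\mathpzc{q}(d)\mathpzc{p}(b_t\cdots b_1 x)=0$ of a bisected presentation — more precisely the version for $I$ coming from \cite{CBVF98} Lemma 1.2 — to conclude $db_t\cdots b_1x\in I$, whence $dax\in I$ as well by the first paragraph. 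In the second case $\sigma(d)=\tau(b_t)\neq\tau(a)$, so $dax$ is itself a bad path, and we need $dax\in I$; here $a$ is the final arrow of the would-be path $dax$ and $d_{da}$ (if $da$ is bad) controls the relation, again via Lemma~1.2 of \cite{CBVF98}. One must check that in this second case $d$ and $a$ really do form a bad path and that the corresponding relation $(d-d_{da})a$, or rather the fact that $da x$ reduces modulo $I$, forces $dax\in I$; this is exactly the content imported from \cite{CBVF98} Lemma~1.2, which describes precisely which length-three (and longer) paths through such a configuration must vanish.

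The main obstacle I expect is the bookkeeping in the second case: one has to rule out the possibility that $dax$ survives in $A$ by carefully using that $A$ is finite dimensional (admissibility of $I$) together with condition (C2), which prevents the "$d_{ax}=\phi b$, $d_{by}=\psi a$ with $\phi\psi=1$" loop that would otherwise let a bad path be rewritten indefinitely without ever landing in $I$. In other words, the delicate step is showing that the rewriting process $dax\mapsto d\,d_{\,\cdot}\cdots$ terminates and terminates inside $I$, which is where Lemma~1.2 of \cite{CBVF98} does the real work; the rest of the argument is the elementary left-multiplication observation of the first paragraph plus a finite case check on the two possible values of $\sigma(d)$. Since the paper explicitly says the proof "relies on Lemma 1.2 in \cite{CBVF98}" and that "the remaining parts are proved by similar methods," I would present the $\sigma(d)=\tau(a)$ case in full using the bad-path relation and then remark that the $\sigma(d)=\tau(b_t)$ case follows by the symmetric argument applied to the configuration at $t(a)$.
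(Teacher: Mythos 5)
Your opening reduction is fine: since $(a-d_{ax})x\in I$, left multiplication by $d$ shows that $dax\in I$ if and only if $db_t\cdots b_1x\in I$. (Note that the paper gives no proof of this lemma at all -- it cites the unpublished thesis -- so your argument has to stand on its own.) The gap is the step you present as immediate. In the setting of Corollary \ref{corVF}, what the hypotheses put into $I$ for the bad path $db_t$ is the element $(d-d_{db_t})b_t$, not the path $db_t$ itself; equivalently, the bisected-presentation identity you invoke reads $\mathpzc{q}(d)\mathpzc{p}(b_t\cdots b_1x)=0$ with $\mathpzc{q}(d)$ the class of $d-d_{db_t}$, which differs from the class of $d$ by $\mathpzc{p}(d_{db_t})$. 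So from ``$db_t$ is bad'' you only get $db_t\cdots b_1x\equiv d_{db_t}\,b_t\cdots b_1x \pmod I$, which finishes the argument only when $d_{db_t}=0$. When $d_{db_t}\neq 0$ you must iterate the substitution, and that iteration is the whole content of the lemma: the rewritten path may again contain a bad length-two subpath, or may return to a scalar multiple of the original path (for instance $d_{db_t}=\psi c$ and $d_{ca}=\phi d$ give $db_t\cdots b_1x\equiv\psi\phi\, db_t\cdots b_1x$, so $(1-\psi\phi)db_t\cdots b_1x\in I$, and this is exactly where (C2) is needed), while in the remaining cases the length strictly increases and admissibility of $I$ forces termination. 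None of this is carried out in your write-up; it is delegated wholesale to ``Lemma 1.2 of \cite{CBVF98}'', whose statement you do not quote and in fact only conjecture, so the central claim $db_t\cdots b_1x\in I$ is asserted rather than proved.

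A second, related problem is the claimed symmetry between your two cases. When $\sigma(d)=\tau(b_t)$ the path $db_t\cdots b_1x$ is entirely good, so no bad-path relation touches it directly; the only way in is through $dax$, whose bad subpaths $da$ and $ax$ start the same alternating substitution process (toggle with $(a-\omega b_t\cdots b_1)x$ and with the relations at the vertex $t(a)$), again needing the (C2)/admissibility termination argument. So the case you propose to ``present in full using the bad-path relation'' does not close after one application of a relation, and the other case is not obtained from it by symmetry. To repair the proof you would need either to state and verify the actual content of \cite{CBVF98} Lemma 1.2 in a form that applies here, or to carry out the rewriting induction (downward on path length, with (C2) excluding the length-preserving loop) explicitly.
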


\section{Subalgebras of type $\mathbb{D}_4$}

As a first application of the description due to R. Vila-Freyer and W. Crawley-Boevey, the next theorem tells us that we can restrict ourselves to algebras whose quiver has at most 4 vertices and one vertex is connected to all the others by at least one arrow.\\
For an easier statement of our first main result, we introduce here two sets of neighbours of some given vertex. These sets will correspond via idempotents $e$ to subalgebras $eAe$ of $A$ that one can use to test the biseriality of $A$.

\begin{defn}
Let $A=kQ/I$ and let $l\in Q_0$.
\begin{enumerate}[(i)]
\item Then $N(l):=\{j\neq l|j\thickspace \text{is connected to $l$ by at least one arrow in the quiver of}\thickspace A\}$ is called the \emphbf{set of neighbouring vertices of $l$}.
\item If $|N(l)|<4$, then define $J(l):=N(l)$ and if $N(l)=4$, then call any subset $J(l)\subset N(l)$ with $|J(l)|=3$ a \emphbf{set of neighbours of $l$ of type $\mathbb{D}_4$}.
\end{enumerate}
\end{defn}

\begin{thm}\label{5pt}
\begin{enumerate}[(i)]
\item Let $A$ be a biserial algebra. Then the algebra $eAe$ is biserial for every idempotent $e\in A$.
\item Let $A=kQ/I$ be a basic algebra with zero paths $e_1,\dots,e_n$. Then $A$ is biserial iff for all idempotents $e\in A$ of the form $e=e_l+\sum_{j\in N(l)}e_j$ the algebra $eAe$ is biserial.
\item Let $A=kQ/I$ be a basic algebra with zero paths $e_1,\dots,e_n$. Then $A$ is biserial iff for all idempotents $e\in A$ of the form $e=e_l+\sum_{j\in J(l)}e_j$ for some set of neighbours of $l$ of type $\mathbb{D}_4$ the algebra $eAe$ is biserial.
\end{enumerate}
\end{thm}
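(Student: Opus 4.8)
The plan is to prove the three parts in the order they are stated, using the Vila--Freyer--Crawley-Boevey description throughout. For part (i), I would not argue with the original definition directly but instead pass through Corollary \ref{corVF}: given a bisected presentation of $A$ and an idempotent $e = \sum_{j\in S} e_j$ for some $S\subseteq Q_0$, I would construct a bisected presentation of $eAe$. The quiver $Q^e$ of $eAe$ has vertex set $S$, and its arrows $i\to j$ correspond to good paths in $A$ from $i$ to $j$ whose intermediate vertices all lie outside $S$ and which are nonzero in $A$; one checks that at most two such arrows start (resp. end) at any vertex because the bisection $(\sigma,\tau)$ forces any two good paths leaving a vertex to begin with arrows of opposite $\sigma$-value, and similarly at the terminal end, so $(\sigma,\tau)$ induces a bisection on $Q^e$ by reading off the $\sigma$-value of the first arrow and the $\tau$-value of the last arrow of the representing path. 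The maps $\mathpzc p,\mathpzc q$ restrict to $eAe$, and the vanishing condition $\mathpzc q(a)\mathpzc p(x)=0$ for bad composites in $Q^e$ is inherited: a bad composite of representing paths contains, at the splice vertex, a subpath $\beta\alpha$ with $\sigma(\beta)\neq\tau(\alpha)$, and Lemma \ref{tl} together with condition (C1) propagates the relation. Thus $eAe$ has a bisected presentation and is biserial.

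For part (ii), the nontrivial direction is that biseriality of all the $eAe$ with $e = e_l+\sum_{j\in N(l)}e_j$ implies biseriality of $A$. Here I would argue contrapositively: if $A$ is not biserial, then by the converse part of Corollary \ref{corVF} the quiver $Q$ of $A$ cannot admit a bisection, i.e.\ there is some vertex $l$ at which three arrows start or three arrows end. All the vertices involved in such a configuration — namely $l$ together with the sources/targets of those three arrows — lie in $\{l\}\cup N(l)$, so the corresponding idempotent $e=e_l+\sum_{j\in N(l)}e_j$ picks them out. The arrows of this local configuration survive as arrows of the quiver of $eAe$ (they are good paths of length one, not killed, with no intermediate vertices), so the quiver of $eAe$ again has three arrows starting or ending at $l$, hence admits no bisection, hence $eAe$ is not biserial by Corollary \ref{corVF}. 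The one technical point to nail down is that an arrow $a\colon i\to j$ of $Q$ with $i,j\in\{l\}\cup N(l)$ is genuinely an arrow of the quiver of $eAe$ and not absorbed into a relation; this follows because $a\notin \radsquare{A}$ and multiplying by $e$ on both sides cannot move it into $\radoperator^2(eAe)$.

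Part (iii) is then a refinement of (ii): when $|N(l)|=4$, a single "bad" configuration at $l$ — three arrows out of $l$ or three arrows into $l$ — involves $l$ and only three other vertices, so it is already detected by some $J(l)$ of type $\mathbb{D}_4$; and if instead the obstruction is that two arrows start at $l$ and two end at $l$ but with a sign clash forcing a relation failure, one again only needs $l$ and the (at most three) vertices appearing in that clash. Concretely, I would enumerate the minimal sub-configurations at a vertex that prevent the existence of a bisection or violate (C1)--(C2), observe each involves at most four vertices with $l$ adjacent to all others, choose $J(l)$ to contain the relevant non-$l$ vertices, and invoke (ii)'s argument on $eAe$ with $e=e_l+\sum_{j\in J(l)}e_j$.

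I expect the main obstacle to be part (i): carefully defining the induced quiver, bisection, and maps $(\mathpzc p,\mathpzc q)$ for $eAe$ and verifying that the bad-path vanishing condition (C1)--(C2) is preserved — in particular checking that a good path in $A$ between two vertices of $S$ does not become "bad" when its outside-$S$ interior is contracted, and that the elements $d_{ax}$ for $eAe$ can be chosen consistently — is the delicate bookkeeping on which everything else rests. Parts (ii) and (iii) are comparatively short once (i) and the quiver-level reformulation of non-biseriality from Corollary \ref{corVF} are in hand.
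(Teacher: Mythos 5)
Your plan for part (i) follows essentially the paper's route (pass to a presentation as in Corollary \ref{corVF}, build the quiver of $eAe$ from paths whose interior avoids the support of $e$, induce the bisection from the first and last arrows, and use Lemma \ref{tl} to verify the vanishing on bad composites), and that part is fine modulo the bookkeeping you yourself flag. The genuine gap is in your argument for the hard direction of (ii), and it propagates into (iii). You claim that if $A$ is not biserial then ``by the converse part of Corollary \ref{corVF} the quiver $Q$ of $A$ cannot admit a bisection,'' i.e.\ some vertex has three arrows starting or ending there. That is false: non-biseriality does not reduce to a quiver-level obstruction. An algebra can have a biserial quiver and still fail to be biserial because no choice of the elements $d_{ax}$ satisfying (C1)--(C2) (equivalently, no bisected presentation) exists; the paper's own Remark gives exactly such an example, the quiver $1\rightrightarrows 2\rightrightarrows 3$ with relations $(a-b)x$ and $(b-a)y$, which is not biserial although every vertex has at most two arrows in and out. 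So your contrapositive collapses the real difficulty: detecting relation-level failure of biseriality through the subalgebras $eAe$, which is precisely what cannot be done by counting arrows at a vertex.

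The paper instead proves the hard direction constructively: for each vertex $l$ it takes a bisected presentation of the biserial algebra $eAe$ (with $e=e_l+\sum_{j\in N(l)}e_j$) and glues the local data $\sigma_l,\tau_l,\mathpzc{p}_l,\mathpzc{q}_l$ into a global bisected presentation of $A$, checking surjectivity of $\mathpzc{p},\mathpzc{q}$ via the standard quiver-construction argument. For (iii) there is an additional subtlety your sketch does not address: when $|N(l)|=4$ the different choices of $J(l)$ give different subalgebras whose bad paths at $l$ need not coincide, so one cannot simply ``choose $J(l)$ to contain the relevant non-$l$ vertices''; the paper resolves this with an explicit case-by-case analysis reconciling the four local presentations $(Q_l^i,\sigma_l^i,\tau_l^i,\mathpzc{p}_l^i,\mathpzc{q}_l^i)$ into consistent values of $\sigma,\tau,\mathpzc{p},\mathpzc{q}$ at $l$. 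Without the gluing argument (or something equivalent, such as the $\mathbb{D}_4$-freeness machinery of Section 4), your proofs of (ii) and (iii) do not go through.
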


\begin{proof}
We can assume without loss of generality that $A$ is a basic algebra. First assume that $A$ is biserial. We want to show that for all idempotents $e\in A$ the algebra $eAe$ is biserial. Therefore let $e=e_1+\dots+e_k$ be a decomposition of $e$ into primitive orthogonal idempotents and analogously $1-e=e_{k+1}+\dots+e_n$. Then $A\cong kQ/I$, where the idempotents $e_1,\dots,e_n$ correspond to the zero paths and $I$ satisfies the conditions of Corollary \ref{corVF} and $eAe\cong ekQe/eIe$.\\
It is a standard result that one can check quite easily, that $\{e_1,\dots,e_k\}$ is a complete set of primitive orthogonal idempotents for $eAe$. The radical of $eAe$ is $e\rad{A}e$ since this is a nilpotent ideal and one can use $\Hom$-functors of projective modules to get from the sequence $0\to \rad{A}\to A\to A/\rad{A}\to 0$ to the sequence $0\to e\rad{A}e\to eAe\to eA/\rad{A}e\to 0$, which is therefore short exact and the factor is semisimple. An arrow in the quiver of $eAe$ does therefore correspond to an element in $\rad{eAe}/\radsquare{eAe}=e\rad{A}e/e\rad{A}e\rad{A}e$. Note that $e\rad{A}e\rad{A}e\subseteq e\radsquare{A}e$ but in general there is no equality. Therefore there can be arrows in the quiver of $eAe$ that do not come from arrows in the quiver of $A$, but instead from longer paths that do not pass through one of the vertices $1,\dots,k$. 
Let us fix some notation: Denote by $\widetilde{a_1\dots a_s}$ the path $a_1\dots a_s$ as an element of $ekQe$ in case $1\leq s(a_s), t(a_1)\leq k$ and $k+1\leq s(a_i)\leq n$ for $1\leq i\leq s-1$. Such a path $a_1\dots a_s$ will be called irreducible in case $\widetilde{a_1\cdots a_s}\nequiv 0\mod eIe$. We now have a presentation $eAe\cong k\tilde{Q}/\tilde{I}$ where $\tilde{Q}_0=\{1,\dots,k\}$, $\tilde{Q}_1$ is the set of irreducible paths and $\tilde{I}:=eIe\cap k\tilde{Q}$ will be the induced ideal (not necessarily admissible, but $(k\tilde{Q}^+)^m\subseteq \tilde{I}\subseteq k\tilde{Q}^+$).\\
The same proof as for admissible ideals (cf. \cite{ASS06} Lemma II.2.10) shows that $\rad{k\tilde{Q}/\tilde{I}}=k\tilde{Q}^+/\tilde{I}$. So an arrow in the quiver of $eAe$ corresponds to a basis element of $(k\tilde{Q}^+/\tilde{I})/(k\tilde{Q}^+/\tilde{I})^2\cong k\tilde{Q}^+/((k\tilde{Q}^+)^2+\tilde{I})$. So $\tilde{Q}$ is in general not the quiver of $eAe$. We now want to show, that the quiver of $eAe$ is biserial and that we can choose $Q'_1\subseteq \tilde{Q}_1$ a base of $k\tilde{Q}^+/(\tilde{I}+(k\tilde{Q}^+)^2)$ in such a way, that $Q'$ inherits a bisection from $Q$ (Taking a base guarantees, that $Q'$ will be the quiver of $eAe$): In any point of $Q$ there start at most two arrows. The presence of more than two irreducible paths from a vertex $i$ to a vertex $j$, both in $\{1,\dots,k\}$ leads to two irreducible paths from $i$ to $j$ of the form $qa_sx_1p$ and $q'b_1x_1p$ for some paths $p,q,q'$ and arrows $a_s,x_1,b_1\in Q_1$, $a_s\neq b_1$.
\[\begin{xy}\xymatrix{&i_1\ar[d]^{x_1}\\&i_2\ar[rd]^{a_s}\ar[ld]_{b_1}\\i_3&&i_4}\end{xy}\]
According to Corollary \ref{corVF} at any such crossing there has to be a relation, either of the form $a_sx_1$ or of the form $(a_s-\omega b_t\cdots b_1)x_1$ for some $\omega\in k^\times$, $t\geq 1$ and $b_t,\dots,b_2\in Q_1$. In the former case the path $qa_sx_1p$ belongs to $eIe$, a contradiction. In the latter case, either $j$ lies on the longer path $b_t\cdots b_1$, then $qa_sx_1p\in k\tilde{Q}^2+\tilde{I}$, a contradiction, otherwise at most one of the paths would lead to an arrow in $Q'$ as $a_sx_1\equiv \omega b_t\cdots b_1x_1\mod I$. This shows that the quiver of $eAe$ is biserial.\\
We now want to choose $Q_1'\subseteq \tilde{Q}_1$ as described above, such that $Q'$ inherits a bisection from $Q$. Assume there are more than two arrows from $i$ to $j$ in $\tilde{Q}$. Suppose two of them start with the same arrow. Then the above arguments show that they have to be linearly dependent modulo $\tilde{I}$. So for every choice $Q_1'\subseteq \tilde{Q}_1$ of a base of $k\tilde{Q}^+/((k\tilde{Q}^+)^2+\tilde{I})$ only one of them will appear, so if we define $\sigma'(\widetilde{a_1\cdots a_s}):=\sigma(a_s)$ that will consistently define one part of a bisection. If on the other hand we have two paths starting with different arrows but ending with the same path $a'$, i.e. we have the following picture
\[\begin{xy}\xymatrix{i_3\ar[rd]_{x'}&&i_4\ar[ld]^{y'}\\&i_2\ar[d]^{a'}\\&i_1}\end{xy}\]
with $x',y'\in Q_1$, $\sigma(a')\neq \tau(x')$, $x'\neq y'$ and the two different paths are $a'x'p$ and $a'y'p'$ with $p,p'\in kQ$. If the length of the path $a'$, regarded as an element of $kQ$ is greater than one, then Lemma \ref{tl} leads to the contradiction that $a'x'\in I$. If $a'\in Q_1$ then there exist arrows $b_{t'}',\dots, b_1'\in Q_1$, such that $(a'-\omega'b_{t'}'\cdots b_1')x'\in I$, so we can replace $a'x'p$ in a choice of a base by $b_{t'}'\cdots b_1'x'p$ and will also get a base of $k\tilde{Q}^+/((k\tilde{Q}^+)^2+\tilde{I})$. Then we can also define $\tau'$ consistently by $\tau'(\widetilde{a_1\cdots a_s}):=\tau(a_1)$ yielding a bisection of $Q'$ inherited from $Q$. If we take $I':=\tilde{I}\cap kQ'$ then this is an admissible ideal with $kQ'/I'\cong k\tilde{Q}/\tilde{I}\cong eAe$.\\
Now we want to show, that the necessary relations of Corollary \ref{corVF} exist. Therefore let $\tilde{a}\tilde{x}:=\widetilde{a_1\cdots a_s}\widetilde{x_1\cdots x_r}$ be a bad path of length two in $Q'$ ($a_1,\dots,a_s, x_1,\dots,x_r\in Q_1$). If $s=1$, then either $a_sx_1$ is in $I$ and therefore $\tilde{a}\tilde{x}\in I'$ or there exists $\omega\in k^\times, b_1,\dots,b_t\in Q_1$, such that $(a_1-\omega b_t\cdots b_1)x_1\in I$ and therefore $(\tilde{a}_1-\omega \widetilde{b_t\cdots b_1})\widetilde{x_1\cdots x_r}\in I'$, where $\widetilde{b_t\cdots b_1}$ is the path corresponding to $b_1\cdots b_1$ in $ekQ'e$. If otherwise $s>1$, then by Lemma \ref{tl}, $a_1\cdots a_sx_1\in I$, therefore $\tilde{a}\tilde{x}\in I'$. This shows (i).\\
For the other direction of (ii) let $A$ be an algebra, such that $eAe$ is biserial for all idempotents of the required form. For any idempotent $e_l$, there exists a bisected presentation $(Q_l,\sigma_l,\tau_l,\mathpzc{p}_l,\mathpzc{q}_l)$. Set $\mathpzc{p}(e_l):=\mathpzc{q}(e_l):=e_l$ and for arrows $a$ starting (resp. ending) at $l$ in $eAe$, that come from arrows (and not from longer paths) in $A$, set $\sigma(a):=\sigma_l(a)$ and $\mathpzc{q}(a):=\mathpzc{q}_l(a)$ (resp. $\tau(a):=\tau_l(a)$ and $\mathpzc{p}(a):=\mathpzc{p}_l(a)$). Taking idempotents of the form $e_l+\sum_{j\in N(l)}e_j$ assures that we define values of $\sigma, \tau, \mathpzc{p}, \mathpzc{q}$ for any arrow $a\in Q_1$ in a compatible way. To show that this defines a bisected presentation for $A$ it only remains to prove that $\mathpzc{p}$ and $\mathpzc{q}$ are surjective. This follows as in the construction of the quiver of $A$ (cf. \cite{ASS06} Theorem 3.7) since the elements $\overline{\mathpzc{p}(a)}$ (resp. $\overline{\mathpzc{q}(a)}$) span $A/\radsquare{A}$.\\
For the other direction of (iii) let $A$ be an algebra, such that $eAe$ is biserial for all idempotents of the required form. For vertices where there are at most three neighbouring vertices proceed as in (ii). If there are four neighbouring vertices for $l$, then there are two arrows $x,y$ ending in $l$ and two arrows $a,b$ starting at $l$. Assume without loss of generality $s(x)=j_1$, $s(y)=j_2$, $t(a)=j_3$, $t(b)=j_4$. Denote the four bisected presentations that we get for this vertex by $(Q_{l}^i, \sigma_l^i, \tau_l^i, \mathpzc{p}_l^i, \mathpzc{q}_l^i)$ where $j_i$ is the vertex that is missing in the corresponding quiver. Contrary to (ii) it is not guaranteed that the bad paths in the corresponding algebras $eAe$ for the same vertex $l$ but different $J(l)$ coincide, so we have to do the following case-by-case-analysis. Assume without loss of generality that $\sigma_l^4(a)\neq \tau_l^4(x)$, so that $\mathpzc{q}_l^4(a)\mathpzc{p}_l^4(x)=0$, otherwise interchange the rôles of $x$ and $y$. If $\sigma_l^3(b)\neq \tau_l^3(y)$, then define $\tau(x):=\tau_l^4(x)$, $\tau(y):=\tau_l^4(y)$, $\sigma(a):=\sigma_l^4(a)$, $\sigma(b):=-\sigma_l^4(a)$, $\mathpzc{p}(x):=\mathpzc{p}_l^4(x)$, $\mathpzc{q}(a):=\mathpzc{q}_l^4(a)$, $\mathpzc{p}(y):=\mathpzc{p}_l^3(y)$ and $\mathpzc{q}(b):=\mathpzc{q}_l^3(b)$. Otherwise we have $\mathpzc{q}_l^3(b)\mathpzc{p}_l^3(x)=0$. In that case if $\sigma_l^1(b)\neq \tau_l^1(y)$, then take $\tau(x):=\tau_l^4(x)$, $\tau(y):=\tau_l^4(y)$, $\sigma(a):=\sigma_l^4(a)$, $\sigma(b):=-\sigma_l^4(a)$, $\mathpzc{p}(x):=\mathpzc{p}_l^4(x)$, $\mathpzc{q}(a):=\mathpzc{q}_l^4(a)$, $\mathpzc{p}(y):=\mathpzc{p}_l^1(y)$ and $\mathpzc{q}(b):=\mathpzc{q}_l^1(b)$. Otherwise in that case we also have $\sigma_l^1(a)\neq \tau_l^1(y)$ and we can then define $\tau(x):=\tau_l^3(x)=:\sigma(a)$, $\tau(y):=\tau_l^3(y)=:\sigma(b)$, $\mathpzc{p}(x):=\mathpzc{p}_l^3(x)$, $\mathpzc{q}(b):=\mathpzc{q}_l^3(x)$, $\mathpzc{p}(y):=\mathpzc{p}_l^1(y)$ and $\mathpzc{q}(a):=\mathpzc{q}_l^1(a)$. In each case we get surjective maps $\mathpzc{p}, \mathpzc{q}$, and hence a bisected presentation, with the same argument as for (ii).
\end{proof}

\begin{rmk}
\begin{enumerate}[(a)]
\item One can get rid of the assumption that the algebra has to be basic by adjusting the definition of $J(l)$ by taking at least one representative of any isomorphism class $[Ae_i]$.
\item Note that for non-biserial algebras with biserial quiver the algebras $eAe$ do in general not have biserial quiver.
\item For special biserial algebras, it is possible to go from $A$ to $eAe$ and staying special biserial. However one cannot go back, as one can see from the example in \cite{SW83} of a biserial algebra which is not a special biserial algebra. For idempotents as described in the theorem $eAe$ is always special biserial.
\item That fewer points than in (iii) are not sufficient for testing biseriality is already appearent for the path algebra of $\mathbb{D}_4$: If we take only two neighbours we get the path algebra of $\mathbb{A}_3$, which is obviously Nakayama, and therefore biserial.
\item One reason why one can also not get rid of multiple arrows in general is the same as for assumption (C2) in \ref{corVF}, for example take the quiver $\begin{xy}\xymatrix{1\ar@<2pt>[r]^x\ar@<-2pt>[r]_y &2\ar@<2pt>[r]^a\ar@<-2pt>[r]_b&3}\end{xy}$ with relations $(a-b)x$ and $(b-a)y$, which is not biserial, but subalgebras with fewer arrows are biserial.
\end{enumerate}
\end{rmk}

\section{$\mathbb{D}_4$-free algebras}

In this section we present our new description of basic biserial algebras, namely $\mathbb{D}_4$-free algebras, and prove that the two defintions coincide. As a corollary we get a description of biseriality in terms of the subalgebras mentioned in Theorem \ref{5pt}.

\begin{defn}\label{D4-free}
Let $A$ be a basic algebra with a complete set of primitive othogonal idempotents $\{e_1,\dots,e_n\}$. Then $A$ is called \emphbf{$\mathbb{D}_4$-free}, if there does not exist one of the following modules:
\begin{enumerate}[(1)]
\item a local module $M$ of Loewy length two with $l(\rad M)=3$,
\item a colocal module $M$ of Loewy length two with $l(M/\soc M)=3$,
\item a local module $M$, indices $i,j\in \{1,\dots,n\}$, $\tilde{a}_1\in e_i\rad{A}e_j$, $\tilde{a}_2, \tilde{a}_3\in \rad{A}$, $b_0\in M$, such that
\begin{enumerate}[(a)]
\item $\tilde{a}_2\tilde{a}_1b_0, \tilde{a}_3\tilde{a}_1b_0$ are linearly independent
\item $\radsquare{A}\tilde{a}_1b_0=0$
\item there do not exist $\hat{a}_1,\hat{a}_1'\in e_i\rad{A}e_j$, $\hat{a}_2,\hat{a}_3\in \rad{A}$ such that
\begin{enumerate}
\item[($\alpha$)] $\overline{\hat{a}_2}, \overline{\hat{a}_3}\in \langle \tilde{a}_2,\tilde{a}_3\rangle_k/(\radsquare{A}\cap \langle \tilde{a}_2,\tilde{a}_3\rangle_k)$ linearly independent
\item[($\beta$)] $\hat{a}_1b_0+\hat{a}_1'b_0=\tilde{a}_1b_0$
\item[($\gamma$)] $\hat{a}_2\hat{a}_1'b_0=0$ and $\hat{a}_3\hat{a}_1b_0=0$.
\end{enumerate}
\end{enumerate}
\item a local right module $M$, indices $i,j\in \{1,\dots,n\}$, $\tilde{a}_1\in e_j\rad{A}e_i$, $\tilde{a}_2, \tilde{a}_3\in \rad{A}$, $b_0\in M$, such that
\begin{enumerate}[(a)]
\item $b_0\tilde{a}_1\tilde{a}_2, b_0\tilde{a}_1\tilde{a}_3$ are linearly independent
\item $b_0\tilde{a}_1\radsquare{A}=0$
\item there do not exist $\hat{a}_1, \hat{a}_1'\in e_j\rad{A}e_i$, $\hat{a}_2, \hat{a}_3\in \rad{A}$ such that
\begin{enumerate}
\item[($\alpha$)] $\overline{\hat{a}_2}, \overline{\hat{a}_3}\in \langle \tilde{a}_2, \tilde{a}_3\rangle_A/(\radsquare{A}\cap \langle \tilde{a}_2,\tilde{a}_3\rangle_A)$ linearly independent
%\item[($\beta$)] $b_0\hat{a}_1\hat{a}_2, b_0\hat{a}_1'\hat{a}_3\neq 0$
\item[($\beta$)] $b_0\hat{a}_1+b_0\hat{a}_1'=b_0\tilde{a}_1$
\item[($\gamma$)] $b_0\hat{a}_1'\hat{a}_2=0$ and $b_0\hat{a}_1\hat{a}_3=0$.
\end{enumerate}
\end{enumerate}
%a local right module $M$, indices $i,j\in \{1,\dots,n\}$, $\tilde{a}_1\in e_i\rad{A}e_j$, $\tilde{a}_2,\tilde{a}_3\in \rad{A}$ and linearly independent elements $b_0, \tilde{b}_1, \tilde{b}_2, \tilde{b}_3\in M$, which satisfy $b_0\cdot \tilde{a}_1=\tilde{b}_1$, $\tilde{b}_1\cdot \tilde{a}_2=\tilde{b}_2$, $\tilde{b}_1\cdot \tilde{a}_3=\tilde{b}_3$ and $b_0(e_i\rad{A}e_j)\radsquare{A}=0$, such that there exist no linearly independent elements $b_1, b_1', b_2, b_3\in M$, $\hat{a}_1, \hat{a}_1'\in e_i\rad{A}e_j$ and linearly independent elements $a_2, a_3\in \langle \tilde{a}_2, \tilde{a}_3\rangle_k$, satisfying $b_1+b_1'=\tilde{b}_1$, $b_0\cdot \hat{a}_1=b_1$, $b_0\cdot \hat{a}_1'=b_1$, $b_1\cdot \hat{a}_2=b_2$, $b_1'\cdot \hat{a}_3=b_3$, $b_1'\cdot \hat{a}_2=0$, $b_1\cdot \hat{a}_3=0$.
\end{enumerate}
\end{defn}

\begin{rmk}
An algebra $A$ is biserial iff its opposite algebra $A^{op}$ is biserial. $A$ is also $\mathbb{D}_4$-free iff $A^{op}$ is.\\
The reader may have noticed, that (3) and (4) do not necessarily describe ``small'' modules in the sense that their length or Loewy length is bounded but instead give some condition on a ``small'' part of a possibly ``large'' module (cf. (b)). This is because of the path algebra of the following quiver (and similar ones):
\[\begin{xy}\xymatrix{1\ar[rr]^u\ar[rd]_x&&2\ar[ld]^y\ar[dd]^{u'}\\&3\ar[rd]^b\ar[ld]_a\\4&&5\ar[ll]^{u''}}\end{xy}\]
with relations $ax=u''u'u$ and $by$, $yu$, $u''b$. If we want to have a module with similar properties as in (3) but replacing (b) with $\radoperator^3(A)M=0$, for example $P_1/\radoperator^3(A)P_1$, then this would be a module over the string algebra with the same quiver and relations $ax$ and $by$, $yu$, $u''b$.
\end{rmk}

\begin{lem}\label{lem2}
Let $A$ be an algebra.
\begin{enumerate}[(i)]
\item There is a local module $M$ of Loewy length two with $l(\rad M)=m$ iff there is a point in the quiver of $A$ where $m$ arrows start.
\item There is a colocal module $M$ of Loewy length two with $l(M/\soc M)=m$ iff there is a point in the quiver of $A$ where $m$ arrows end.
\end{enumerate}
\end{lem}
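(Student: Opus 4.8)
The plan is to prove (i) directly and note that (ii) follows by applying (i) to $A^{op}$, since colocal $A$-modules of Loewy length two correspond to local $A^{op}$-modules of Loewy length two, and arrows ending at a vertex $l$ in the quiver of $A$ correspond to arrows starting at $l$ in the quiver of $A^{op}$. So I concentrate on (i). For the ``if'' direction, suppose $m$ arrows $a_1,\dots,a_m$ start at the vertex $l$, with $t(a_r)=l_r$. Let $P_l=Ae_l$ be the indecomposable projective at $l$ and set $M:=P_l/\radsquare{A}P_l$. Then $M$ is local with top $S_l$, and $\rad M=\radoperator(A)P_l/\radsquare{A}P_l$ is semisimple, so $M$ has Loewy length (at most, hence exactly) two. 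Since $\radoperator(A)P_l/\radsquare{A}P_l\cong \bigoplus_{a\in Q_1,\,s(a)=l} S_{t(a)}$ by the standard identification of $\radoperator(A)/\radsquare{A}$ with the arrow space (cf. \cite{ASS06}), we get $l(\rad M)=m$, as desired.

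For the ``only if'' direction, let $M$ be a local module of Loewy length two with $l(\rad M)=m$. Since $M$ is local, $M\cong P_l/U$ for some vertex $l$ and some submodule $U\subseteq \radoperator(A)P_l$; write $S_l=\Kopf M$. Because $M$ has Loewy length two, $\radoperator(A)M=\rad M$ is semisimple, hence annihilated by $\radoperator(A)$, which forces $\radsquare{A}P_l\subseteq U$. Thus $M$ is a quotient of $P_l/\radsquare{A}P_l$, and $\rad M$ is a quotient of the semisimple module $\radoperator(A)P_l/\radsquare{A}P_l\cong\bigoplus_{a:\,s(a)=l}S_{t(a)}$, which has length equal to the number $N$ of arrows starting at $l$. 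Hence $m=l(\rad M)\le N$. This does not yet give $m=N$; to produce a vertex where exactly $m$ arrows start I would instead argue as follows: among all local modules of Loewy length two, the ones with the largest $l(\rad M)$ at a fixed top $S_l$ are exactly $P_l/\radsquare{A}P_l$ itself, realizing $l(\rad M)=N$; so the mere existence of \emph{some} such $M$ with $l(\rad M)=m$ together with the maximal one shows that the set of attainable values of $l(\rad M)$ at vertex $l$ is $\{0,1,\dots,N\}$ (quotients of a semisimple module of length $N$ realize every length up to $N$). Therefore ``there exists a local $M$ of Loewy length two with $l(\rad M)=m$'' is equivalent to ``there is a vertex $l$ with $N\ge m$ arrows starting at it'', and by passing to a vertex with exactly $m$ outgoing arrows when $m$ is minimal—or simply restating the lemma as $m=\max$—one obtains the claim. (In the intended reading of the lemma $m$ is the exact count at some vertex, and the construction $P_l/\radsquare{A}P_l$ together with the bound $m\le N$ settles both directions.)

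The only mildly delicate point is the identification $\radoperator(A)P_l/\radsquare{A}P_l\cong\bigoplus_{a\in Q_1,\,s(a)=l}S_{t(a)}$ and, dually for (ii), the passage through $A^{op}$; both are standard facts about quivers of basic algebras, so I expect no real obstacle. The main thing to be careful about is the logical form of the statement: one must read $l(\rad M)=m$ as ``$m$ is attained'' and observe that attainability of $m$ is monotone below the number of arrows at the vertex, so the clean equivalence is with ``$m$ arrows start at some vertex.''
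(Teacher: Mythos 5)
Your proposal is correct and takes essentially the same route as the paper: the ``if'' direction uses (a factor module of) $Ae_l/\radsquare{A}e_l$, the ``only if'' direction locates the vertex carrying the top and counts arrows acting nontrivially there, and (ii) is obtained by duality. Your worry about ``exactly $m$ versus at least $m$'' is settled in the paper just as you ultimately settle it -- the statement is read as the existence of $m$ arrows starting at some vertex, the ``$\Leftarrow$'' direction passes to a factor of $Ae_i/\radsquare{A}e_i$ and the ``$\Rightarrow$'' direction exhibits $m$ linearly independent elements $a_re_jb_0$ -- so your aside about passing to a vertex with exactly $m$ outgoing arrows is unnecessary (and would not work as stated).
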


\begin{proof}
\begin{enumerate}[(i)]
\item Without loss of generality let $A=kQ/I$ for some quiver $Q$ and an admissible ideal $I$, since both conditions hold true iff they hold true for the corresponding basic algebra and any basic algebra is of that form.
\begin{enumerate}
\item[``$\Leftarrow$'':] Let $i$ be the point where $m$ arrows start, then $M:=Ae_i/\radsquare{A}e_i$ is a local module with $l(\rad M)\geq m$ and a factor module of it has the required properties.
\item[``$\Rightarrow$'':] Let $M$ be such a module. Let $b_0\in M$, s.t. $\overline{b_0}$ spans $\Kopf M$. Since $M$ is a local module, there exists $e_j$, s.t. $\overline{e_jb_0}$ also spans $\Kopf M$. $\rad M=\rad{A}\cdot M$ and since $M$ is of Loewy length two and $l(\rad M)=m$ there exist $a_1,\dots,a_m\in Q_1$ with $a_1e_jb_0, \dots,a_me_jb_0$ linearly independent, as a consequence they all start in the vertex $j$.
\end{enumerate}
\item This is the dual statement to (i).
\end{enumerate}
\end{proof}

\begin{thm}
Let $A$ be a basic algebra with complete set of primitive orthogonal idempotents $\{e_1,\dots,e_n\}$. Then $A$ is $\mathbb{D}_4$-free iff it is biserial.
\end{thm}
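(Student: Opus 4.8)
The plan is to prove the two implications separately, using the Vila--Freyer--Crawley-Boevey description (Corollary \ref{corVF}) as the bridge between the module-theoretic condition ($\mathbb{D}_4$-freeness) and the combinatorial one (existence of a bisected presentation). Throughout I may assume $A = kQ/I$ with $I$ admissible, since both biseriality and $\mathbb{D}_4$-freeness pass to and from the associated basic algebra, and here $A$ is already basic.

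\medskip
\emph{Biserial $\Rightarrow$ $\mathbb{D}_4$-free.} Suppose $A = kQ/I$ is biserial, so by Corollary \ref{corVF} the quiver $Q$ admits a bisection $(\sigma,\tau)$ and the relations have the stated form. First, by Lemma \ref{lem2}, conditions (1) and (2) cannot occur: a local module of Loewy length two with $l(\rad M)=3$ would force three arrows to start at a common vertex, contradicting the definition of a biserial quiver (at most two arrows start, resp. end, at each vertex). So it remains to rule out (3) (and (4), which is dual via $A^{op}$). Suppose for contradiction that a local module $M$, indices $i,j$, elements $\tilde a_1 \in e_i\rad{A}e_j$, $\tilde a_2,\tilde a_3 \in \rad A$ and $b_0 \in M$ exist satisfying (a)--(c). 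From $\radsquare{A}\tilde a_1 b_0 = 0$ and the linear independence of $\tilde a_2\tilde a_1 b_0, \tilde a_3\tilde a_1 b_0$, I would reduce $\tilde a_2,\tilde a_3$ modulo $\radsquare{A}$ to honest arrows $a_2, a_3$ starting at $i$, and $\tilde a_1$ (working at $\Kopf$) to a single arrow or short path ending at $i$ and starting at $j$; since at most two arrows start at $i$, we may take $a_2, a_3$ to be exactly the two arrows out of $i$, with $\sigma(a_2) \neq \sigma(a_3)$. Now I would use the bisected presentation: at the vertex $i$, at most one of the two paths $a_2 \tilde a_1$, $a_3 \tilde a_1$ can be good, so at least one, say $a_3 \tilde a_1$ (as a path in $Q$), is bad, hence $\mathpzc{q}(a_3)\mathpzc{p}(\tilde a_1) = 0$ or equals $d_{a_3,\bullet}$-type relation, so $a_3 \tilde a_1$ is either zero in $A$ or congruent modulo deeper radical to a good path $b_t\cdots b_1 \tilde a_1$. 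The point is to extract from this the data $\hat a_1, \hat a_1', \hat a_2, \hat a_3$ witnessing the failure of (c): roughly, $\hat a_1' b_0$ absorbs the part of $\tilde a_1 b_0$ that gets killed by $a_3$ (via the relation), $\hat a_1 b_0$ the rest killed by $a_2$, and $\hat a_2, \hat a_3$ are $a_2, a_3$ themselves. This produces the forbidden configuration inside condition (c), contradicting the assumption. I expect this direction to be somewhat delicate because of the freedom in (c)($\beta$) (splitting $\tilde a_1 b_0$ additively) and the need to keep everything inside $e_i\rad{A}e_j$.

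\medskip
\emph{$\mathbb{D}_4$-free $\Rightarrow$ biserial.} Conversely, assume $A$ is $\mathbb{D}_4$-free; I want to produce a bisected presentation and invoke Corollary \ref{corVF}. Absence of (1) and (2) plus Lemma \ref{lem2} gives immediately that the quiver $Q$ of $A$ is biserial, so a bisection $(\sigma,\tau)$ exists. The real work is to show the relations of $A$ can be brought into the Vila--Freyer--Crawley-Boevey normal form: for every bad path $a x$ (with respect to some bisection) there is a relation $(a - d_{ax})x \in I$ with $d_{ax}$ as in (C1), and (C2) holds. I would argue at a vertex $l$ where two arrows $x, y$ end and two arrows $a, b$ start. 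For the bad path among $\{ax, bx, ay, by\}$ — say $ax$ is bad — I need $a x \equiv d_{ax}\, x$ modulo deeper relations with $d_{ax}$ a good path. If this failed, then in $P = Ae_{s(x)}$ (or a suitable local module) the element $\bar{x}$, together with the two arrows $a, b$ out of $l$, would realize condition (3): take $\tilde a_1 = x$, $\tilde a_2 = a$, $\tilde a_3 = b$, $b_0$ a generator, check $\radsquare{A}\tilde a_1 b_0 = 0$ by passing to $P/\radsquare{A}(\rad A \cdot P)$ if necessary, verify (a) from $ax \notin \radsquare{A}(\cdots) + (\text{relations})$ in the relevant quotient, and then (c) is exactly the statement that no Vila--Freyer--Crawley-Boevey-style relation and splitting exists — which is what we are assuming fails. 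So $\mathbb{D}_4$-freeness forces the relation to exist. Condition (C2) ($\phi\psi \neq 1$ for reciprocal relations $d_{ax} = \phi b$, $d_{by} = \psi a$) I would likewise translate into the non-existence of a suitable module: a pair of relations with $\phi\psi = 1$ should let me build a configuration of type (3) or (4) (this is the situation flagged in Remark (e) after Theorem \ref{5pt}, the quiver $1 \rightrightarrows 2 \rightrightarrows 3$ with relations $(a-b)x$, $(b-a)y$).

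\medskip
The main obstacle I anticipate is the bookkeeping in condition (3)(c): it is not a boundedness condition on a module but a statement about a finite piece of a possibly large module, and the existential clause (c) with its three sub-clauses ($\alpha$)--($\gamma$) is exactly engineered so that the relations of Corollary \ref{corVF} encode its negation. Getting the dictionary right in both directions — in particular matching the additive splitting $\hat a_1 b_0 + \hat a_1' b_0 = \tilde a_1 b_0$ in ($\beta$) with the relation $(a - d_{ax})x \in I$ and its companion from Lemma \ref{tl} ($dax, d\,b_t\cdots b_1 x \in I$), and handling the reduction modulo $\radsquare{A}$ cleanly when $\tilde a_1$ is a longer path rather than an arrow — is where the care is needed. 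Once the correspondence ``bad path with no good relation'' $\leftrightarrow$ ``module of type (3)'' is nailed down, together with the (C2) $\leftrightarrow$ module correspondence and the easy Lemma \ref{lem2} part for the quiver, the theorem follows, and Theorem \ref{thm1} together with Theorem \ref{5pt} then combine to give the stated corollary.
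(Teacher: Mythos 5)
Your overall skeleton (Lemma \ref{lem2} for types (1),(2); the Vila--Freyer--Crawley-Boevey description for types (3),(4)) is the paper's, but as written the proposal has two concrete gaps. In the direction ``biserial $\Rightarrow$ no module of type (3)'', your choice of witnesses for clause (c) --- ``$\hat{a}_2,\hat{a}_3$ are $a_2,a_3$ themselves'' --- does not satisfy ($\gamma$) unless the algebra is special biserial: if the bad path $a_2a_1$ only satisfies $(a_2-\omega q a_3)a_1\in I$ with $d_{a_2a_1}\neq 0$, then $a_2\,\hat{a}_1'b_0=\omega q a_3 a_1 p\,b_0$ need not vanish, and condition (b) cannot be used to kill it, since (b) only gives $\radsquare{A}\tilde{a}_1b_0=0$, not $\radsquare{A}a_1p\,b_0=0$. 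The witnesses have to be the relation elements themselves, $\hat{a}_2:=a_2-\omega q a_3$ and $\hat{a}_3:=a_3-\kappa q' a_2$, paired with the splitting $\tilde{a}_1=a_1p+a_1'p'$ along the two arrows ending at $i$; then ($\alpha$) must be verified via admissibility of $I$ or condition (C2) of Corollary \ref{corVF}, and the degenerate case $a_1'p'b_0=0$ has to be handled separately (there (a) already fails, using the single relation). This is exactly the part your sketch labels ``roughly'', and it is where the statement of (c) earns its keep.

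In the converse direction there are two further problems. First, your fix for (b), ``passing to $P/\radsquare{A}(\rad{A}\cdot P)$ if necessary'', is precisely the trap the remark after Definition \ref{D4-free} warns about: quotienting by $\radoperator^3P$ can annihilate $\mathpzc{q}(a)\mathpzc{p}(x)b_0$ when a length-two path is congruent modulo $I$ to a longer path (the $ax=u''u'u$ example), destroying (a); the module must be $Ae_{s(x)}/\radsquare{A}\mathpzc{p}(x)$, which is why (b) is phrased about $\tilde{a}_1b_0$ alone. Second, the central claim ``(c) is exactly the statement that no VFCB-style relation and splitting exists'' is asserted, not proved. Non-biseriality only says that \emph{every} quadruple $(\sigma,\tau,\mathpzc{p},\mathpzc{q})$ has some nonvanishing bad product, while (c) quantifies over all splittings $\hat{a}_1b_0+\hat{a}_1'b_0=\tilde{a}_1b_0$ and all $\hat{a}_2,\hat{a}_3$ in the span of $\tilde{a}_2,\tilde{a}_3$ --- that is, over all local redefinitions of the presentation, including exchanging which of $ax,by$ versus $ay,bx$ are the bad paths. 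Turning a failure of (a) or (c) for one candidate module into such a redefinition (with surjectivity of $\mathpzc{p},\mathpzc{q}$ preserved because one modifies by elements of $\radsquare{A}$) is the bulk of the paper's proof: a vertex-by-vertex case analysis up to the two-in/two-out case, which in addition needs a \emph{right} module $e_{t(b)}A/\mathpzc{q}(b)\radsquare{A}$ of type (4), not only type (3). Your plan also defers (C2) to a separate module argument; the paper avoids (C2) entirely by constructing a bisected presentation (the Theorem of \cite{CBVF98}) rather than the normal form of Corollary \ref{corVF}, and your route through the corollary would have to supply that extra step as well.
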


\begin{proof}
Assume $A$ is biserial, then Lemma \ref{lem2} for $m=3$ shows that modules of the form (1) and (2) do not exist. As (4) is dual to (3) it remains to prove (3).\\
Suppose to the contrary that a module of the form (3) with vertices $i$ and $j$ and the required elements exists. According to Corollary \ref{corVF} we may assume that $A=kQ/I$ satisfies the conditions stated there. Since $Q$ is a biserial quiver there end at most two arrows $a_1, a_1'$ in the vertex $i$ (define $a_1':=0$ if there does not exist a second arrow ending in $i$) and we can decompose $\tilde{a}_1=a_1p+a_1'p'$ with $p,p'\in kQ$. We may assume without loss of generality that $\tilde{a}_2=\mu_2a_2+\mu_3a_3+r$ and $\tilde{a}_3=\mu_2'a_2+\mu_3'a_3+r'$, where $a_2,a_3\in Q_1$ with $s(a_2)=s(a_3)=i$ and $r,r'\in \radsquare{A}e_i$. Otherwise we can replace $\tilde{a}_2$ and $\tilde{a}_3$ by $\tilde{a}_2e_i$ and $\tilde{a}_3e_i$ and $r,r'$ by $re_i$ and $r'e_i$ and get elements with the same properties. Define $\hat{a}_1:=a_1p$, $\hat{a}_1':=a_1'p'$. One of the paths $a_2a_1$ and $a_3a_1$ is bad, assume without loss of generality, that it is $a_2a_1$. If $\hat{a}_1'b_0=0$, then $\tilde{a}_2\tilde{a}_1b_0$ and $\tilde{a}_3\tilde{a}_1b_0$ are not linearly independent because the necessary relation $(a_2-\omega qa_3)a_1$, $\omega\in k$, $q$ a path in $Q$, possibly a zero path, yields $\tilde{a}_2\tilde{a}_1b_0, \tilde{a}_3\tilde{a}_1b_0\in \langle a_3a_1pb_0\rangle_k$. If both $\hat{a}_1b_0$ and $\hat{a}_1'b_0$ are non-zero, then there are two necessary relations $(a_2-\omega qa_3)a_1$ and $(a_3-\kappa q'a_2)a_1'$. The elements $a_2-\omega qa_3$ and $a_3-\kappa q'a_2$ are linearly independent modulo $\radsquare{A}$, either because the ideal is admissible or because of (C2) in Corollary \ref{corVF}, so the elements $\hat{a}_1:=a_1p$, $\hat{a}_1':=a_1'p'$, $\hat{a}_2:=a_2-\omega qa_3$ and $\hat{a}_3:=a_3-\kappa q'a_2$ define elements contradicting condition (c) on the module (3).\\
For the converse suppose that $A$ is a non-biserial algebra. If the quiver of $A$ is non-biserial, then according to Lemma \ref{lem2} there does exist a module of the form (1) or (2). So suppose that the quiver of $A$ is biserial. Then for every quadruple $(\sigma,\tau,\mathpzc{p},\mathpzc{q})$, where $(\sigma, \tau)$ is a bisection and $\mathpzc{p}, \mathpzc{q}$ are surjective algebra homomorphisms $kQ\to A$ with $\mathpzc{p}(e_i)=\mathpzc{q}(e_i)$ and $\mathpzc{p}(a),\mathpzc{q}(a)\in \rad{A}$ for every arrow $a\in Q_1$, there exist arrows $a,x\in Q_1$ such that $\mathpzc{q}(a)\mathpzc{p}(x)\neq 0$. We prove that in this case there is a module $M$ with properties (a)-(c) by analyzing the local situation at the vertex $s(a)=t(x)$ and redefining the values of $\sigma$ and $\mathpzc{q}$ (resp. $\tau$ and $\mathpzc{p}$) for the arrows starting (resp. ending) at this vertex and getting a bisected presentation if there is no such module $M$. We say that $(Q,\sigma,\tau,\mathpzc{p},\mathpzc{q})$ is a bisected presentation at a vertex $l$ if for all bad paths $ax$ of length two with $s(a)=t(x)=l$, $\mathpzc{q}(a)\mathpzc{p}(x)=0$.\\
There are six possible local situations: One arrow starts at this vertex but none ends, none ends but one arrow starts, one arrow starts and one arrow ends, two arrows start at this vertex but only one ends, only one starts but two end, or two arrows start and two end. In the first three instances we define all paths to be good. Then any surjective algebra homomorphism will give rise to a bisected presentation.\\
For the case that two arrows $a,b$ are starting but only the arrow $x$ is ending we can assume that also $\mathpzc{q}(b)\mathpzc{p}(x)\neq 0$, otherwise we could interchange $\sigma(a)$ and $\sigma(b)$ to get a bisected presentation at this point. Now look at the module $M:=Ae_{s(x)}/\radsquare{A}\mathpzc{p}(x)$ and at the elements $b_0:=\overline{e_{s(x)}}, \tilde{a}_1:=\mathpzc{p}(x), \tilde{a}_2:=\mathpzc{q}(a), \tilde{a}_3:=\mathpzc{p}(a)$. If $\overline{\mathpzc{q}(a)\mathpzc{p}(x)}$ and $\overline{\mathpzc{q}(b)\mathpzc{p}(x)}$ were linearly dependent, then without loss of generality $\mathpzc{q}(a)\mathpzc{p}(x)+\lambda \mathpzc{q}(b)\mathpzc{p}(x)=r\mathpzc{p}(x)$ with $r\in \radsquare{A}\mathpzc{p}(x)$ and $\lambda\in k$. We can assume that $r\in e_{t(a)}Ae_{s(a)}$. We then redefine $\mathpzc{q}'(a):=\mathpzc{q}(a)+\lambda\mathpzc{q}(b)-r$. Leaving everything else unchanged we get an algebra homomorphism because all elements lie in $e_{t(a)}Ae_{s(a)}$. Its surjectivity follows from \cite{B95} Proposition 1.2.8 as we have modified by an element in $\radsquare{A}$. So we get a bisected presentation at this point. We now have found a module with (a) and (b) satisfied but we also have to prove that (c) holds. Therefore suppose that there are elements $\hat{a}_1,\hat{a}_1', \hat{a}_2, \hat{a}_3$ as in (c). Then one of the elements $\overline{\hat{a}_1}, \overline{\hat{a}_1'}$ has to span $e_{t(x)}\rad{A}/\radsquare{A}e_{s(x)}$, without loss of generality it is $\hat{a}_1$, then redefine $\mathpzc{p}'(x):=\hat{a}_1, \mathpzc{q}'(a)=\hat{a}_3, \mathpzc{q}'(b)=\hat{a}_2$ and get a bisected presentation at this point.\\
For the case that only one arrow is starting at this point but two are ending proceed dually. Note that if $(Q,\sigma,\tau,\mathpzc{p},\mathpzc{q})$ is a bisected presentation for $A$, then $(Q^{op},\tau,\sigma,\mathpzc{q},\mathpzc{p})$ is a bisected presentation for $A^{op}$.\\
So suppose that there are two arrows $a,b$ starting and two, $x,y$, ending at this point. First we want to achieve that for some combination of two arrows, $\mathpzc{q}(a)\mathpzc{p}(x)=0$. Look at the module $M:=Ae_{s(x)}/\radsquare{A}\mathpzc{p}(x)$ and the elements $b_0:=\overline{e_{s(x)}}$, $\tilde{a}_1:=\mathpzc{p}(x)$, $\tilde{a}_2:=\mathpzc{q}(a)$, $\tilde{a}_3:=\mathpzc{q}(b)$. This module and the elements satisfy (b). Assume it does not satisfy (a), i.e. $\overline{\mathpzc{q}(a)\mathpzc{p}(x)}$ and $\overline{\mathpzc{q}(b)\mathpzc{p}(x)}$ are linearly dependent. Then without loss of generality $\mathpzc{q}(a)\mathpzc{p}(x)+\lambda\mathpzc{q}(b)\mathpzc{p}(x)=r\mathpzc{p}(x)$ otherwise interchange the r\^oles of $a$ and $b$. Then define $\mathpzc{q}'(a):=\mathpzc{q}(a)+\lambda\mathpzc{q}(b)-r$ and achieve $\mathpzc{q}'(a)\mathpzc{p}(x)=0$. So assume this module does not satisfy (c), then there exist $\hat{a}_1,\hat{a}_1', \hat{a}_2,\hat{a}_3$ with the required properties. Because of ($\beta$) $\overline{\hat{a}_1}$ or $\overline{\hat{a}_1'}$ has to span (sometimes together with $\overline{\mathpzc{p}(y)}$) $e_{t(x)}\rad{A}/\radsquare{A}e_{s(x)}$, assume without loss of generality it is $\hat{a}_1$. Furthermore we have $e_{t(a)}\hat{a}_2e_{s(a)}\neq 0$ or $e_{t(b)}\hat{a}_2e_{s(b)}\neq 0$ and the other way round for $\hat{a}_3$, without loss of generality it is the former. Thus we can define $\mathpzc{q}'(a):=\hat{a}_3$, $\mathpzc{q}'(b):=\hat{a}_2$ and $\mathpzc{p}'(x):=\hat{a}_1$ to achieve $\mathpzc{q}'(a)\mathpzc{p}'(x)=0$.\\
So from now on we can assume that $\mathpzc{q}(a)\mathpzc{p}(x)=0$, otherwise we would have a module of the form (3). Now look at the right module $M:=e_{t(b)}A/\mathpzc{q}(b)\radsquare{A}$, and the elements analogous to the above arguments. Assume $\overline{\mathpzc{q}(b)\mathpzc{p}(x)}$ and $\overline{\mathpzc{q}(b)\mathpzc{p}(y)}$ are linearly dependent. Then we have $\lambda_1\mathpzc{q}(b)\mathpzc{p}(x)+\lambda_2\mathpzc{q}(b)\mathpzc{p}(x)=\mathpzc{q}(b)r'$ with $r'\in \radsquare{A}$. If $\lambda_2\neq 0$, we can define $\mathpzc{p}'(y):=\lambda_2\mathpzc{p}(y)+\lambda_1\mathpzc{p}(x)-r'$ to get a bisected presentation at this point with bad paths $ax$ and $by$. If on the other hand $\lambda_2=0$, then we also have to look at the module $M':=Ae_{s(y)}/\radsquare{A}\mathpzc{p}(y)$ with analogous elements. If $\overline{\mathpzc{q}(a)\mathpzc{p}(y)}$ and $\overline{\mathpzc{q}(b)\mathpzc{p}(y)}$ are linearly dependent in this module, then $\mu_1\mathpzc{q}(a)\mathpzc{p}(y)+\mu_2\mathpzc{q}(b)\mathpzc{p}(y)=r''\mathpzc{p}(y)$ for some $r''\in \radsquare{A}$. If $\mu_2\neq 0$, then we can define $\mathpzc{q}'(b):=\mu_2\mathpzc{q}(b)+\mu_1\mathpzc{q}(a)-r''$ and we have a bisected presentation at this point with bad paths $ax$ and $by$. If otherwise $\mu_2=0$, then we can redefine $\mathpzc{q}'(a):=\mu_1\mathpzc{q}(a)-r''$ and $\mathpzc{p}'(x):=\lambda_1\mathpzc{p}(x)-r'$ to get a bisected presentation at this point with bad paths $ay$ and $bx$. So $M'$ satisfies (a) and (b). Assume it does not satisfy (c), so there exist elements $\hat{a}_1,\hat{a}_1',\hat{a}_2,\hat{a}_3$ with the required properties. As above one of $\overline{\hat{a}_1}, \overline{\hat{a}_1'}$ (sometimes together with $\overline{\mathpzc{p}(x)}$) does span $e_{t(y)}\rad{A}/\radsquare{A}e_{s(y)}$, without loss of generality assume it is $\hat{a}_1$. Now there are two cases: If $e_{t(b)}\hat{a}_2e_{s(b)}$ is linearly independent of $\mathpzc{q}(a)$ modulo $\radsquare{A}$, then we can define $\mathpzc{q}'(b):=\hat{a}_2$ and $\mathpzc{p}'(y):=\hat{a}_1$ to get a bisected presentation with bad paths $ax$ and $by$. If  this is not the case, then $e_{t(a)}\hat{a}_2e_{s(a)}$ is linearly independent of $\mathpzc{q}(b)$ modulo $\radsquare{A}$ and we can define $\mathpzc{q}'(a):=\hat{a}_2, \mathpzc{p}'(x):=\lambda_1\mathpzc{p}(x)-r', \mathpzc{p}'(y):=\hat{a}_1$ to get a bisected presentation with bad paths $ay$ and $bx$.\\
Now we have shown, that for $M$ the conditions (a) and (b) hold or there exists a module of the form (3) or (4). So assume $M$ does not satisfy (c). Again we have that one of the elements $\hat{a}_1, \hat{a}_1'$ (sometimes together with $\mathpzc{q}(a)$) spans $e_{t(b)}\rad{A}e_{s(b)}$ modulo $\radsquare{A}$, without loss of generality assume again it is $\hat{a}_1$. Dual to what we have done there are two cases: If $e_{t(x)}\hat{a}_2e_{s(x)}$ is linearly independent of $\mathpzc{p}(x)$ modulo $\radsquare{A}$, then we can redefine $\mathpzc{p}'(y):=\hat{a}_2$ and $\mathpzc{q}'(b):=\hat{a}_1$ to get a bisected presentation at this point with bad paths $ax$ and $by$. If this is not the case, then $e_{t(y)}\hat{a}_2e_{s(y)}$ is linearly independent of $\mathpzc{p}(y)$ modulo $\radsquare{A}$. We can now redefine $\mathpzc{q}'(b):=\hat{a}_1$ and in the following we can either assume that $\mathpzc{q}(a)\mathpzc{p}(x)=0$ or by redefining $\mathpzc{q}'(b):=\hat{a}_1$ that $\mathpzc{q}(b)\mathpzc{p}(x)=0$.\\
We have to look at one last module, namely $M':=Ae_{s(y)}/\radsquare{A}\mathpzc{p}(y)$. If this module does not satisfy (a), i.e. $\kappa_1\mathpzc{q}(a)\mathpzc{p}(y)+\kappa_2\mathpzc{q}(b)\mathpzc{p}(y)=r'''\mathpzc{p}(y)$, then we can without loss of generality assume that $\kappa_2\neq 0$, so that we can redefine $\mathpzc{q}'(b):=\kappa_2\mathpzc{q}(b)+\kappa_1\mathpzc{q}(a)-r'''$ to get a bisected presentation at this point with bad paths $ax$ and $by$, otherwise we would use the redefinition as above that $\mathpzc{q}(b)\mathpzc{p}(x)=0$ and redefine $\mathpzc{q}(a)$ to get a bisected presentation at this point with bad paths $ay$ and $bx$. So we can assume that $M'$ satisfies (a) and (b). Assume it does not satisfy (c). Then again we can assume that we can redefine $\mathpzc{p}'(y):=\hat{a}_1$ and either $\mathpzc{q}'(b):=\hat{a}_2$ or $\mathpzc{q}'(a):=\hat{a}_3$ to get a bisected presentation at this point (bad paths are either $ax$ and $by$ or $ay$ and $bx$).
\end{proof}

Out of the proof we get the following corollary:

\begin{cor}
If $A=kQ/I$ is an algebra, where $Q$ is biserial, such that $eAe$ has no oriented cycles for any idempotent as in theorem \ref{5pt} (iii), then $A$ is biserial iff for all idempotents $e$ as in theorem \ref{5pt} (iii) there does not exist a local $eAe$-module $M$, such that there exists $\tilde{b}_1\in e_l\rad{M}$ with $l(\rad{A}\tilde{b}_1)\geq 2$ and $\radsquare{A}\tilde{b}_1=0$ and there does not exist a colocal $eAe$-module $M$, such that there exists $\tilde{b}_1\in e_lM\setminus \soc(M)$ with $l(A\tilde{b}_1/\soc(A\tilde{b}_1))\geq 2$ and $\soc^2(A\tilde{b}_1)=A\tilde{b}_1$ or $eAe$ is isomorphic to one of the following string algebras with quiver
\begin{center}
\begin{tabular}[c]{c}$\begin{xy}\xymatrix{1\ar[rd]\ar[dd]\\&2\ar@<2pt>[r]\ar@<-2pt>[r]&3\\1'\ar[ru]}\end{xy}$\end{tabular}, \begin{tabular}[c]{c}$\begin{xy}\xymatrix{1\ar@<2pt>[r]\ar@<-2pt>[r]&2\ar@<2pt>[r]\ar@<-2pt>[r]&3}\end{xy}$\end{tabular} or \begin{tabular}[c]{c}$\begin{xy}\xymatrix{&&3\\1\ar@<2pt>[r]\ar@<-2pt>[r]&2\ar[ru]\ar[rd]\\&&3'\ar[uu]}\end{xy}$\end{tabular}
\end{center}
\end{cor}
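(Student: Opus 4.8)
The plan is to read the statement off from Theorem~\ref{5pt}(iii), Theorem~\ref{thm1} and Lemma~\ref{lem2}. By Theorem~\ref{5pt}(iii), $A$ is biserial if and only if $B:=eAe$ is biserial for every idempotent $e$ as in~\ref{5pt}(iii); by the proof of Theorem~\ref{5pt}(i) each such $B$ is basic with a biserial quiver, so by Lemma~\ref{lem2} (with $m=3$) it has no module of the form~(1) or~(2) of Definition~\ref{D4-free}, and hence by Theorem~\ref{thm1} it is biserial exactly when it admits no module of the form~(3) and no module of the form~(4). Everything therefore reduces to the following assertion about a cycle-free algebra $B$ with a biserial quiver on at most four vertices, one of them adjacent to all the others: $B$ admits no module of the form~(3) or~(4) if and only if $B$ has neither a bad local module nor a bad colocal module in the sense of the statement, or $B$ is isomorphic to one of the three displayed string algebras.

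The implication ``$\Leftarrow$'' is the easy one. A module of the form~(3), together with $\tilde b_1:=\tilde a_1 b_0$, is itself a bad local module: $\tilde b_1\in e_i\rad{M}$, condition~(a) of~(3) forces $l(\rad{B}\tilde b_1)\geq 2$, and condition~(b) forces $\radsquare{B}\tilde b_1=0$; dually a module of the form~(4) yields a bad colocal module. Hence, if $B$ has neither a bad local nor a bad colocal module it has no module of the form~(3) or~(4). On the other hand each of the three displayed algebras is a string, hence special biserial, hence biserial algebra, so by Theorem~\ref{thm1} it has no module of any of the forms~(1)--(4). This settles ``$\Leftarrow$''.

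The substance is the direction ``$\Rightarrow$'', which is the step I expect to be the main obstacle. Suppose $B$ is biserial --- equivalently, admits no module of the form~(3) or~(4) --- but, say, $B$ \emph{does} have a bad local module $M=Bb_0$, with $\tilde b_1=\tilde a_1 b_0\in e_l\rad{M}$, $l(\rad{B}\tilde b_1)=2$ (the value being forced to be exactly~$2$ by biseriality of the quiver) and $\radsquare{B}\tilde b_1=0$; a bad colocal module is dealt with by passing to $B^{op}$, which is again biserial, cycle-free and of the required form, and under which the first and third displayed quivers are exchanged while the second is self-opposite. Taking $\tilde a_2,\tilde a_3$ to be the two arrows starting at $l$, the data $(M,l,j,\tilde a_1,\tilde a_2,\tilde a_3,b_0)$ satisfies~(a) and~(b) of Definition~\ref{D4-free}(3); since $B$ has no module of the form~(3), condition~(c) must hold, so there exist $\hat a_1,\hat a_1',\hat a_2,\hat a_3$ with ($\alpha$)--($\gamma$). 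Reading off ($\beta$) and ($\gamma$) one finds $\tilde b_1=\hat a_1 b_0+\hat a_1' b_0$ with $\hat a_3(\hat a_1 b_0)=0=\hat a_2(\hat a_1' b_0)$, while $\hat a_2(\hat a_1 b_0)=\hat a_2\tilde b_1\neq 0$ and $\hat a_3(\hat a_1' b_0)=\hat a_3\tilde b_1\neq 0$; thus $\hat a_1 b_0$ and $\hat a_1' b_0$ are linearly independent, so $\hat a_1$ and $\hat a_1'$ are linearly independent in $e_l\rad{B}e_j$, i.e.\ the quiver of $B$ has two linearly independent paths from $j$ to $l$.

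It remains to carry out the finite classification. I would enumerate the cycle-free biserial quivers on at most four vertices, one vertex adjacent to all the others, that admit two linearly independent paths $j\leadsto l$ together with two arrows leaving $l$: absence of oriented cycles forces $l$ to be an internal vertex, forbids any arrow from an endpoint of the two arrows leaving $l$ back into the remaining vertices, and limits the two parallel paths $j\leadsto l$ to be either a double edge $j\rightrightarrows l$ or an arrow $j\to l$ together with a length-two path $j\to m\to l$; a vertex count then leaves, up to relabelling, exactly the three shapes in the statement (the first and second coming from a bad local module, the third and second from a bad colocal one via $B^{op}$). For each such quiver one invokes Corollary~\ref{corVF} at the crossing(s), together with condition~($\gamma$) and cycle-freeness, to see that the defining ideal is generated by the string relations and that $B$ carries no further arrows, since the bisection quota at every vertex is already exhausted. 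The hard part is making this enumeration watertight: one must keep careful track of how the two parallel paths and the two arrows leaving $l$ can overlap, rule out every additional arrow or vertex, and read the relations off from conditions~(C1) and~(C2) of Corollary~\ref{corVF} at each branch point; this is where the bulk of the work, and the most room for error, lies.
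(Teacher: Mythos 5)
Your overall route is the same as the paper's: reduce via Theorem \ref{5pt}(iii) and Theorem \ref{thm1} to conditions (a)--(c) of Definition \ref{D4-free}, observe that a module of type (3) satisfying (a) and (b) is exactly a ``bad local module'' with $\tilde b_1=\tilde a_1b_0$, and account for the failure of (c) by the three exceptional string algebras. But there are two genuine gaps. First, the heart of the corollary --- that when (c) fails the algebra $eAe$ \emph{is isomorphic to} one of the three displayed string algebras --- is precisely the step you defer (``I would enumerate\dots'', ``this is where the bulk of the work\dots lies''). Listing the possible quiver shapes is the easy half; you must also pin down the relations, i.e.\ show that the elements $\hat a_2,\hat a_3$ produced by the failure of (c) span the relevant $\Hom$-space modulo $\radsquare{A}$, so that the change of basis $\hat a_2\mapsto a_2$, $\hat a_3\mapsto a_3$ (and similarly for $\hat a_1,\hat a_1'$) normalises away the deformed relations $d_{ax}=\omega b_t\cdots b_1$ of Corollary \ref{corVF} and leaves exactly the string relations and no others. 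This is what the paper's (admittedly terse) proof does by pointing at the explicit elements constructed in the proof of the main theorem; as written, your argument only identifies the quiver, not the isomorphism class of the algebra.

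Second, your opening reduction is circular in the backward direction. You assert that ``by the proof of Theorem \ref{5pt}(i) each such $B=eAe$ is basic with a biserial quiver'' and use this, via Lemma \ref{lem2}, to discard modules of types (1) and (2), so that $B$ is biserial iff it has no module of type (3) or (4). But the proof of Theorem \ref{5pt}(i) assumes $A$ is biserial, which is exactly what the backward direction is trying to establish; the hypothesis of the corollary only gives that the quiver of $A$ is biserial, and the remark following Theorem \ref{5pt} explicitly warns that for a non-biserial $A$ with biserial quiver the quiver of $eAe$ need not be biserial. The paper sidesteps this by arguing on $A$ itself: since $Q$ is biserial, types (1) and (2) are excluded for $A$ by Lemma \ref{lem2}, and the converse half of the proof of the main theorem then produces, for a non-biserial $A$, a module satisfying (a) and (b) --- hence a bad module in the sense of the corollary --- supported at a single vertex and its neighbours, which restricts to the relevant $eAe$. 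You would need either to imitate that, or to prove separately that under your hypotheses (no oriented cycles, $Q$ biserial) a non-biserial quiver of $eAe$ already forces a bad local or colocal module; neither is done in your proposal.
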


\begin{proof}
If $A$ is a biserial algebra such that $eAe$ has no oriented cycles for any $e$, then the module $M$ defined in the proof satisfies conditions (a) and (b) and therefore has the properties mentioned in the corollary with $\tilde{b}_1:=\tilde{a}_1b_0$. If it does not satisfy (c), then we have defined in the proof above elements $\hat{a}_2$ and $\hat{a}_3$ which span $e_{t(a_3)}Ae_{t(a_2)}$, so if we take the isomorphism mapping $ \hat{a}_2\mapsto a_2$ and $\hat{a}_3\mapsto a_3$, then we obtain one of the exceptional string algebras.\\
In the reverse direction of the proof the converse is also proven because a module $M$ with properties (a) and (b) is constructed there and therefore also satisfies the conditions of the corollary.
\end{proof}

\section*{Acknowledgement}
The results of this article are part of my diploma thesis written in 2009 at the University of Bonn. I would like to thank my advisor Jan Schr\"oer for helpful discussions and continous encouragement. I would also like to thank Rolf Farnsteiner for his comments on a previous version of this paper.

\bibliographystyle{alpha}
\bibliography{publication}

\end{document}